\newtheorem{thm}{Theorem}[section]
\newtheorem{theorem}[thm]{Theorem}
\newtheorem{lemma}[thm]{Lemma}
\newtheorem{corollary}[thm]{Corollary}
\newcommand{\Order}{\mbox{O}}
\newcommand{\Real}{\mathbb{R}}
\newcommand{\Int}{\mathbb{Z}}
\newcommand{\Exp}{\mathbb{E}}
\newcommand{\Prob}{\mathbb{P}}
\newcommand{\Pwin}{\mathbb{P}^{(\rm win)}}
\renewcommand{\Vec}[1]{\mbox{\boldmath{$#1$}}}
\title{
Lower Bounds for Bruss' Odds Problem \\ with Multiple Stoppings
}
\author{
Tomomi Matsui (Tokyo Institute of Technology) \\
Katsunori Ano (Shibaura Institute of Technology)
}
\date{}
\begin{document}

\maketitle

%

\begin{abstract}
This paper deals with Bruss' odds problem with multiple stopping chances. 
A decision maker observes sequentially a sequence of independent 0/1 
	(failure/success) random variables with the objective to predict 
	the last success correctly with multiple stopping chances.
First, we give a non-trivial lower bound of the probability of win 
	(obtaining the last success) for the problem with $m$-stoppings. 
Next, we show that the asymptotic value 
	for each classical secretary problem with multiple
	stoppings attains our lower bound. 
Finally, we prove a conjecture on
	the classical secretary problem, 
	which gives a connection between 
	the probability of win 
	and the threshold values of the optimal stopping strategy.
\end{abstract}

\section{Introduction.}

We discuss asymptotic lower bounds  of probability of ``{\em win}'' 
	({\it i.e.}, obtaining the last success) 
	for odds problem with multiple stoppings,
	which has some general setting in optimal stopping theory.
The problem may be stated as follows. 

\begin{quote}
Suppose that you know that 
	you will be given a random sequence of zeros and ones; 
	you know how long the sequence will be, say $N$; 
	you know that you will be given the digits one by one; 
	you do not know what the next digit will be zero or one, 
	but you do know the probability that it will be a one. 
You are allowed to claim at most $m$ times, 
	when you see a one, 
	``the one that I observe now, 
	will be the last one in the sequence.'' 
You will {\em win} if your last claim turns out to be indeed the last one.
\end{quote}

\noindent
The above problem is called an {\em $m$-stopping odds problem}.
You want to win with maximal probability. 
Of course, we assume $m<< N$, 
	for instance $m = 1$, which is the Bruss' optimal stopping problem, 
	and which is closely related to the classical secretary problem.

Now we define a class of feasible policies, say $\Pi$.
Each policy in $\Pi$
	gets input of a sequence of $0/1$ random variables 
	${\cal X}=(X_1, X_2,\ldots , X_N)$,
	that is, Bernoulli sequence.
We say ``{\em success}'' if $X_i=1$ and 
	``{\em failure}'' if $X_i=0$. 
A policy $\pi \in \Pi$ returns (indices of) variables 
	in $\cal X$
	at which you claim to observe the last success.
The value of the policy,
	which is called the {\em probability of win}, 
	is its probability to win:
	$\Prob [\pi (\cal X) =1]$
	(where ``$= 1$'' means ``includes the variable of the last success''). 
An optimal policy would be 
	$\pi^* = \arg \max_{\pi \in \Pi} \Prob [\pi (\cal X) =1 ]$.

This is an attractive problem setting. 
We may quote from Bruss~\cite{BRUSS2000}: 
``Many stopping problems are of a similar kind. 
One often wants to stop on the very last success. 
For instance, investors are typically interested in 
	stopping on the last success in a given period, 
	where a success is a price increase in a long position 
	and a decrease in a short position. 
Similarly, venture capital investors often try to put 
	all reserved capital in the last technological
	innovation in the targeted field. 
In secretary problems, we want to select 
	the best candidate (which means stopping on the last record value)
	and so on.''
	
The single stopping problem has an elegant 
	and a simple optimal stopping strategy 
	determined by {\em Odds theorem\/} or {\em Sum the Odds theorem}. 
A typical lower bound for an asymptotic 
	optimal value (the probability of win), 
	when $N$ goes to infinity, is shown to be $e^{-1}$ 
	in Bruss~\cite{BRUSS2000,BRUSS2003}. 
The value often appears in the literature of the many 
	modifications of the secretary problem having a specified probability 
	of success, $\Prob [X_i=1]=1/i$ 
	(see, e.g., 
	Pfeifer~\cite{PFEIFER1989}, 
	Samuels~\cite{SAMUELS1993} for a review and others), 
	and in the one of the variations of prophet inequality 
	based on relative ranks 
	(see, e.g., Hill and Krengel~\cite{HILLKRENGEL1992}
	and Hill and Kennedy~\cite{HILLKENNEDY1992}).
The value $e^{-1}$ also appears 
	in the asymptotic threshold value of 
	the optimal stopping strategy for the secretary problem. 
For large $N$, 
	the optimal strategy (for the secretary problem)
	is to pass all the candidates until $e^{-1} N$ 
	and then stop at the first relative best (if any) thereafter. 
Other variations of the single stopping problem are studied by 
	Bruss and Paindaveine~\cite{BRUSSPAINDAVEINE2000} 
	for stopping on the $\ell$-th last success,		
	Hsiau and Yang~\cite{HSIAU2002} for Markov-dependent trials,  and 
	Tamaki~\cite{TAMAKI2010} for stopping on any of the last $\ell$ successes.

For the multiple stopping odds problem, 
	Ano, Kakinuma and Miyoshi~\cite{ANO2010}
	provided an optimal strategy
	based on a (multiple) threshold strategy.
%
%
%
%
They also showed that the double stopping odds problem
	has a lower bound $e^{-\frac{3}{2}}+e^{-1}$
	for an asymptotic optimal value (the probability of win)
	when $N$ goes to infinity. 
It is interesting that their lower bound is equal to
	the asymptotic optimal value of the double stopping secretary problem.
Another variation of Markov-dependent trials with multiple stoppings
	is studied by Ano, Kakie~and Miyoshi~\cite{AKM2011}.
	
For odds problems and the secretary problems with multiple stoppings,  
	this paper answers to the following questions.

\smallskip 

\noindent		
(Q1) What is the maximum probability of win and the lower bound 
	of the odds problem (and not only for the secretary problem) 
	with $m$-stoppings?

We give an explicit formula of the probability of win 
	for the $m$-stopping odds problem,
	which is based on an enumeration technique for specified integer sequences.

\smallskip

\noindent
(Q2) What is the asymptotic lower bound of the probability of win 
		for the $m$-stopping odds problem,
		when $N$ goes to infinity? 

For any fixed number $m$ of stopping chances, 
	we give an asymptotic lower bound 
	of the probability of win for odds problem.  
		
\smallskip

\noindent
(Q3) Is the greatest lower bound of the $m$-stopping odds problem
		equal to the asymptotic optimal value (the probability of win) 
		of the secretary problem with $m$-stopping chances?
	In other words, 
		does the secretary problem still keep benchmark position 
		of the bound for odds problem? 

	In the secretary problem, multiple stopping setting may go back to 
		Gilbert and Mosteller~\cite{GM1966}. 
	Asymptotic optimal values of secretary problems 
	for $m=1,2,3$ and $4$ appears in~\cite{GM1966,BRUSS1989}.  	
	The single stopping odds problem has a lower bound $e^{-1}$, 
		shown by Bruss~\cite{BRUSS2003}, 
		which is equal to the asymptotic optimal value 
		of the classical secretary problem.
	For double stopping case ($m=2$), 
		Ano, Kakinuma and Miyoshi~\cite{ANO2010} derived 
		a lower bound $e^{-\frac{3}{2}}+e^{-1}$ of odds problem,
		which is equal to the asymptotic optimal value 
		of the double stopping secretary problem.
	
We prove that the above property holds for any $m$,  
	which also implies the tightness of our lower bounds.
We also give asymptotic optimal values 
	of the secretary problem with $m$ stopping chances
	for $m=5,6,7,8,9$ and $10$ in Table~\ref{tablesumlambda}.
	
\smallskip		
		
\noindent
(Q4) Is the lower bound of the secretary problem
		remarkably composed of 
		the asymptotic threshold values 
		in the optimal multiple stopping strategy?

For example, 
	it is known that the triple of threshold values 
	$(i_N^{(3)}, i_N^{(2)}, i_N^{(1)})$ 
	for the secretary problem 
	with three stopping chances satisfies
\[
	\lim_{N \rightarrow \infty} 
	 \left( \frac{i_N^{(3)}}{N}, \frac{i_N^{(2)}}{N}, \frac{i_N^{(1)}}{N} \right)
	= (e^{-\frac{47}{24}}, e^{-\frac{3}{2}}, e^{-1}), 
\]
and the corresponding probability of win converges to  
	$(e^{-\frac{47}{24}}+e^{-\frac{3}{2}}+e^{-1})$.
It is also known that a similar relation holds 
	when the number of stopping chances is $1,2,3,$ or $4$.
We prove a conjecture on the secretary problem 
	indicated by Gilbert and Mosteller~\cite{GM1966}
	and raised explicitly in~\cite{ANO2010}, 
	which shows a beautiful connection between threshold values 
    of the optimal stopping strategy and the probability of win. 	
Let  $(i_N^{(m)},$ $i_N^{(m-1)},$ $\ldots, i_N^{(1)})$ 
	be a vector of optimal threshold values for the secretary problem
	with $m$-stopping chances,
	where $N$ denotes the number of random variables.
We prove the following equality
\[
	\lim_{N \rightarrow \infty}
	\left( 
		\frac{i_N^{(m)}}{N}+\frac{i_N^{(m-1)}}{N}+\cdots + \frac{i_N^{(1)}}{N} 
	\right)
	= \lim_{N \rightarrow \infty} \Pwin_N (m)
\]
where $\Pwin_N (m)$ denotes the corresponding probability of win 
	for the secretary problem with $m$-stopping chances.

\section{Threshold Strategy and Probability of Win.}\label{SectionTS}

Let ${\cal X}=(X_1, X_2,\ldots , X_N)$ be 
	a given Bernoulli sequence, i.e.,  
	a sequence of independent $0/1$ random variables.
For any $i \in \{1,2,\ldots,N\}$, 
	we denote $ p_i= \Exp[X_i] $.
Throughout this paper, we assume that $0<p_i<1$
	for any $i$.
We denote a probability of failure $1 - p_i$ by $q_i$ and 
	an odds $\displaystyle \frac{p_i}{q_i}$ of $X_i$ by $r_i$,
	for each $i \in \{1,2,\ldots , N\}$.
In this section, we discuss an optimal strategy 
	for the $m$-stopping odds problem defined on  ${\cal X}$. 



%



By applying dynamic programming (DP) techniques,
	Ano, Kakinuma and Miyoshi~\cite{ANO2010} showed that
	an optimal policy has the structure of a multiple threshold strategy, 
	denoted by {\tt Threshold$(i^{(m)},i^{(m-1)},\ldots,i^{(1)})$}, 
	defined by threshold values satisfying 
	$1 \leq i^{(m)} \leq i^{(m-1)} 
	\leq \cdots \leq i^{(1)} \leq N$.
The threshold strategy 
	{\tt Threshold$(i^{(m)},i^{(m-1)},\ldots,i^{(1)})$}
	selects a variable of success  
	(at which you claim to observe the last success) 	
	if and only if the number of previously selected variables
	is less than the number of passed threshold values 
	on and before the observation.
For example, we consider a case that $(X_1,X_2,\ldots, X_8)$
	has a vector of the realized values 
	$(0,1,1,0,0,1,1,1)$.
If we apply the threshold strategy {\tt Threshold$(2,4,5)$}, 
	for the 3-stopping odds problem, 
	then the strategy selects variables $X_2, X_6,X_7$,
	which does not includes the last success $X_8$.
More precisely, the threshold strategy 
	{\tt Threshold$(i^{(m)},i^{(m-1)},\ldots,i^{(1)})$}
	selects a set of variables indexed by a set of indices $\pi (1)$ defined 
	by the following recurrence relation
\begin{eqnarray*}
	\pi (m+1)&=& \emptyset, \\
	\pi (k)&=& \pi (k+1) \cup 
		\left\{ 
			\min 
			\left\{ i \in \{1,2,\ldots , N\} 
				\left|
					\begin{array}{l}
					i^{(k)} \leq i, \;\; X_i=1,  \\
					i' < i \; (\forall i' \in \pi(k+1)).					
					\end{array}
				\right.
			\right\}
		\right\} (\forall k\in  \{m,m-1,\ldots, 1\}),
\end{eqnarray*}
where we define $\{ \min \{\emptyset\} \}=\emptyset$.

First, we discuss the probability of win of 
	{\tt Threshold$(i^{(m)},i^{(m-1)},\ldots,i^{(1)})$}.
We introduce a partition $\{B_{m+1}, B_m, \ldots , B_1\}$ 
	of index set $\{1,2,\ldots,N\}$ 
defined by 
\[
B_k= \left\{ \begin{array}{ll}
	\{i \in \{1,2,\ldots,N \} \mid i^{(1)} \leq i \leq N \} 
			& (k=1), \\
	\{i \in \{1,2,\ldots,N \} \mid i^{(k)} \leq i < i^{(k-1)} \} 
			&(1<k \leq m), \\
	\{i \in \{1,2,\ldots,N \} \mid 1 \leq i<i^{(m)} \}
			&(k=m+1).
			\end{array} \right.
\]

\noindent
Each index set in $\{B_{m+1}, B_m, \ldots , B_1\}$ is called a {\em block}.
We identify the set of indices in each block
	with the set of corresponding variables, 
	if there is no ambiguity.
Given a 0-1 vector $\Vec{x} \in \{0,1\}^N$, 
	we introduce a {\em pattern vector\/} of $\Vec{x}$, 
	denoted by 
	$\Vec{b}(\Vec{x})=(b_m,b_{m-1},\ldots,b_1)$, 
	satisfying that 
	$b_k$ is the number of 1s in the subvector of $\Vec{x}$ 
	defined by block $B_k$, 
	i.e., $b_k=\sum_{i \in B_k}x_i \; (k \in \{1,2,\ldots,m\})$.
Here we note that elements of vector $\Vec{b}(\Vec{x})$
	are arranged in decreasing order of indices 
	and block $B_{m+1}$ is ignored. 
For any vector $\Vec{b}=(b_d,b_{d-1},\ldots,b_1)$, 
	we say that a vector $(b_{d'},b_{d'-1},\ldots,b_1)$ satisfying $d\geq d' \geq 1$
	is a {\em left truncated subvector\/} of $\Vec{b}$.

If an index corresponding to the last success
	is obtained by executing {\tt Threshold}$(\Vec{i})$,
	we say that a vector of realized values
	$(x_1,x_2,\ldots, x_N) \in \{0,1\}^N$
	of $(X_1,X_2,\ldots,X_N)$ is {\em winning}.
When we consider a single-stopping odds problem
	(discussed in~\cite{BRUSS2003}), 
	a vector $\Vec{x} \in \{0,1\}^N$ is winning 
	if and only if its pattern vector $\Vec{b}(\Vec{x})$ satisfies $b_1=1$,
	i.e.,  one-dimensional vector $(1) \in \{0,1\}^1$ 
	is a left truncated subvector of $\Vec{b}(\Vec{x})$.
The probability of win of 
	{\tt Threshold$(i^{(1)})$} is equal to 
$
	\left( \prod_{i\in B_1} q_i \right)\cdot
	\left( \sum_{i \in B_1} r_i \right).
$


Next, we consider the case that $m=2$
	(discussed in~\cite{ANO2010}).
We assume that  $|B_1| \geq 2$.
It is easy to show that 
	a vector $\Vec{x} \in \{0,1\}^N$ is winning
	if and only if its pattern vector $\Vec{b}(\Vec{x})$
	has a left truncated subvector in 
	$\{(1), (1,0), (0,2)\}$.
Thus, the probability of win of {\tt Threshold}$(i^{(2)},i^{(1)})$
	is equal to 
\[
	\left( \prod_{i \in B_1} q_i \right)
	\left( \sum_{i \in B_1} r_i \right)
+	\left( \prod_{i \in B_2 \cup B_1} q_i \right)
	\left( \sum_{i \in B_2} r_i 
+ 	\sum_{\{i,i'\} \subseteq B_1} r_i r_{i'} \right).
\]

When $m \leq 5$, 
	a brute force method shows that 
	a vector $\Vec{x} \in \{0,1\}^N$ is winning 
	if and only if 
	there exists a set 
	$\Xi_k$ $(k\in \{1,2,\ldots ,5\})$ 
	that includes a left truncated subvector of $\Vec{b}(\Vec{x})$, 
	where
\begin{eqnarray}
\Xi_1 &=& \{ (1) \}, 								\label{XI1} \\
\Xi_2 &=& \{ (1,0), (0,2) \}, 						\label{XI2} \\
\Xi_3 &=& \{ (1,0,0), (0,2,0), (0,1,2), (0,0,3) \},	\label{XI3} \\
\Xi_4 &=& \left\{ \begin{array}{l}
		(1,  0,  0,  0),  
		(0,  2,  0,  0),  
		(0,  1,  2,  0), 
		(0,  1,  1,  2), 
		(0,  1,  0,  3), \\
		(0,  0,  3,  0), 
		(0,  0,  2,  2), 
		(0,  0,  1,  3), 
		(0,  0,  0,  4) 
		\end{array} \right\}, 						\label{XI4} \\
\Xi_5 &=& \left\{ \begin{array}{l}
(	1	,	0	,	0	,	0	,	0	)
(	0	,	2	,	0	,	0	,	0	)
(	0	,	1	,	2	,	0	,	0	)
(	0	,	1	,	1	,	2	,	0	)
(	0	,	1	,	1	,	1	,	2	)
(	0	,	1	,	1	,	0	,	3	)\\
(	0	,	1	,	0	,	3	,	0	)
(	0	,	1	,	0	,	2	,	2	)
(	0	,	1	,	0	,	1	,	3	)
(	0	,	1	,	0	,	0	,	4	)
(	0	,	0	,	3	,	0	,	0	)
(	0	,	0	,	2	,	2	,	0	)\\
(	0	,	0	,	2	,	1	,	2	)
(	0	,	0	,	2	,	0	,	3	)
(	0	,	0	,	1	,	3	,	0	)
(	0	,	0	,	1	,	2	,	2	)
(	0	,	0	,	1	,	1	,	3	)
(	0	,	0	,	1	,	0	,	4	)\\
(	0	,	0	,	0	,	4	,	0	)
(	0	,	0	,	0	,	3	,	2	)
(	0	,	0	,	0	,	2	,	3	)
(	0	,	0	,	0	,	1	,	4	)
(	0	,	0	,	0	,	0	,	5	)
		\end{array} \right\}.		
\end{eqnarray}

\noindent
The following table of the sizes of $\Xi_k$ is obtained
	by a naive computer program for enumeration.

\[
\begin{array}{c||r|r|r|r|r|r|r|r|r|r|r|r}
k  & 1 & 2 & 3 & 4 & 5 & 6 & 7 & 8 & 9 & 10 & 11 & \cdots \\
\hline
|\Xi_k| & 1& 2& 4& 9& 23& 65& 197& 626& 2056& 6918& 23714 & \cdots  \\
\end{array}
\]
Here we discuss an example of the 5-stopping odds problem.
Assume that  $\Vec{x} \in \{0,1\}^N$ has a pattern vector
	$\Vec{b}(\Vec{x})=(3,1,0,2,0)$.
The threshold strategy 
	selects first variable of success in block $B_5$,
	rejects second and third variables of success in $B_5$, 
	and selects all the remained three variables of success in $B_4 \cup B_2$.
The vector $\Vec{x}$ is winning, since $\Vec{b}(\Vec{x})=(3,1,0,2,0)$
	has a left-truncated subvector $(0,2,0) \in \Xi_3$.

Now we discuss the general case.
First, we show a necessary and sufficient condition that
	a vector $\Vec{x} \in \{0,1\}^N$ becomes a winning vector
	of the $m$-stopping odds problem.
Throughout this paper, $\Int_+$ denotes the set of non-negative integers.
We define a set of $k$-vectors
\[
	{\Xi}_k =
	\left\{ (b_k,b_{k-1},\ldots ,b_1) \in \Int_+^k 
	\left|
		\begin{array}{l}
			\exists k^* \in \{k, k-1,\ldots ,2,1 \}, \\
			\quad
			\begin{array}{rcl}
				1 & > & b_k, \\
				2 & > & b_k + b_{k-1}, \\
				  &  \vdots & \\
				k-k^*   & > & b_k +b_{k-1} + \cdots +b_{k^*+1}, \\
		        1+k-k^* & = & b_k +b_{k-1} + \cdots +b_{k^*+1} +b_{k^*}, \\			
			\end{array} \\
			\quad b_{k^*-1}= b_{k^*-2}= \cdots =b_1=0.
		\end{array}
	\right\} 
	\right.
\]
	for each positive integer $k$.
 Here we note that when $k^*=k$ in the above definition, 
 	all the inequalities are ignored and 
 	$(1,0,0,\ldots,0) \in \Int_+^k $ becomes a unique vector 
 	satisfying the remaining conditions.
Thus,  the set $\Xi_k$ contains the $k$-vector $(1,0,0,\ldots,0) $.
%

\begin{lemma}
Let $\Vec{x} \in \{0,1\}^N$ be a vector satisfying that
	$\exists k \in \{1,2,\ldots,  m\}$, 
	$\exists \Vec{b} \in \Xi_k$,
	$\Vec{b}$ is a left truncated subvector of $\Vec{b}(\Vec{x})$.
Then $\Vec{x}$ is a winning vector of the $m$-stopping odds problem.
\end{lemma}

\mbox{Proof. }
Let $k^*$ be an index of variable appearing in the definition of $\Xi_k$
	with respect to $\Vec{b} \in \Xi_k$.
The definition of $\Xi_k$ implies that 
	block $B_{k^*}$ includes 
	the index of the variable of the last success
	and the number of successes with respect to $\Vec{x}$ in 
	blocks $B_k \cup B_{k-1} \cup \cdots \cup B_{k^*}$
	is equal to $1+k-k^*$.

When we apply the threshold strategy,  
	the number of selected variables
	in blocks  $B_m \cup B_{m-1} \cup \cdots \cup B_{k+1}$ 
	is less than or equal to $m-k$, obviously.

Thus, when we observe the variables of the last success,
	the number of previously selected variables
	is less than or equal to $(m-k)+(1+k-k^*)-1=m-k^*$,
	which is less than the number of passed threshold values $m-k^*+1$, 
	where passed threshold values are $\{i^{(m)}, \ldots, i^{(k^*)}\}$.

 It implies that the threshold strategy selects 
	the variable of the last success and thus $\Vec{x}$ is winning.
(Here we note that all the variables of success 
	in $B_k \cup B_{k-1} \cup \cdots \cup B_{k^*}$
	are also selected.)
\hfill \qed 

Next, we discuss the inverse implication.

\begin{lemma}
For any winning vector $\Vec{x} \in \{0,1\}^N$ 
	of the $m$-stopping odds problem, 
	there exists an index $k \in \{1,2,\ldots,  m\}$
	and a vector $\Vec{b} \in \Xi_k$ satisfying that 
	$\Vec{b}$ is a left truncated subvector of $\Vec{b}(\Vec{x})$.
\end{lemma}

\mbox{Proof. }
We show the above by induction on $m$.
When $m=1$,  it is obvious.
We assume that the above property holds for each integer in $\{1,2,\ldots, m-1\}$.

\noindent
(i) Consider a case that
	when we apply the threshold strategy 
	{\tt Threshold$(i^{(m)},\ldots,i^{(1)})$}, 
	the number of previously selected variables is 
	strictly less than the number of passed threshold values 
	at every time instance just after a variable is selected.
Then, it is easy to see that 
	even if we apply the threshold strategy 
	{\tt Threshold$(i^{(m-1)},\ldots,i^{(1)})$} to $\Vec{x}$, 
	the set of selected variables remains unchanged, 
	and thus $\Vec{x}$ is also a winning vector 
	of the $(m-1)$-stopping odds problem.
The induction hypothesis implies that, 
	$\exists k \in \{1,2,\ldots,  m-1\}$, 
	$\exists \Vec{b} \in \Xi_k$ satisfying that
	$\Vec{b}$ is a left truncated subvector of $\Vec{b}(\Vec{x})$.

\noindent
(ii) Next, we consider the remaining case.
Let $i^* \in \{1,2,\ldots , N\}$ be the minimum index
	of a variable selected by the threshold strategy 
	{\tt Threshold$(i^{(m)},\ldots,i^{(1)})$} 
	under the condition that 
	just after $x_{i^*}$ is selected, 
	the number of previously selected variables becomes  
	equal to the number of passed threshold values. 
We denote the block including $i^*$ by $B_{k^*}$.
From the minimality of $i^*$, 
	$m$-vector $\Vec{b}(\Vec{x})=(b_m,b_{m-1},\ldots ,b_1)$ 
	satisfies inequalities
\begin{eqnarray*}
	1 & > & b_m, \\
	2 & > & b_m + b_{m-1}, \\
	&  \vdots & \\
	m-k^*   & > & b_m +b_{m-1} + \cdots +b_{k^*+1}.
\end{eqnarray*} 
Here we note that when $k^*=m$, 
	we omit the above inequalities.
Since the number of previously selected variables is 
	equal to the number of passed threshold values 
	just after $x_{i^*}$ is selected, 
	all the variables 
	in $\{X_i \mid i \in B_{k^*} \mbox{ and } i^*<i\}$ 
	are not selected and thus we have the equality 
\begin{eqnarray} \label{first-apex}
	1+m-k^*  =  b_m +b_{m-1} + \cdots +b_{k^*+1} +b_{k^*}.
\end{eqnarray}

\noindent 
(ii-a) If $x_{i^*}$ is the last success, 
	then $b_{k^*-1}= b_{k^*-2}= \cdots =b_1=0$, 
	which yields $\Vec{b}(\Vec{x}) \in \Xi_m$.

\noindent
(ii-b) 
Lastly, we consider the case that $x_{i^*}$ is not the last success.
Clearly, $B_{k^*}$ does not include 
	the index of the variable of the last success, 
	since variables 
	in $\{x_i \mid i \in B_{k^*} \mbox{ and } i^*<i\}$ 
	are not selected. 

The equality~(\ref{first-apex}) implies that 
	if we apply 
	{\tt Threshold$(i^{(k^*-1)},\ldots,i^{(1)})$} to $\Vec{x}$,  
	the set of selected variables in $B_{k^*-1} \cup \cdots \cup B_2 \cup B_1$
	remains unchanged and thus 
	$\Vec{x}$ is a winning vector 
	of the $(k^*-1)$-stopping odds problem.  
The induction hypothesis implies the desired result. 
\hfill \qed



We can also show the following uniqueness.

\begin{corollary}
For any winning vector $\Vec{x} \in \{0,1\}^N$ 
	of the $m$-stopping odds problem, 
	there exists a unique index $k \in \{1,2,\ldots,  m\}$
	and a unique vector $\Vec{b} \in \Xi_k$ satisfying that  
	$\Vec{b}$ is a left truncated subvector of $\Vec{b}(\Vec{x})$.
\end{corollary}

\mbox{Proof. }
The definition of $\Xi_k$ directly implies that 
	for any pair of vectors 
	$\Vec{b'} \in \Xi_{k'}$ and $\Vec{b''} \in \Xi_{k''}$, 
	when $\Vec{b''}$ is a left truncated subvector of $\Vec{b'}$, 
	then both $\Vec{b'}=\Vec{b''}$ and $k'=k''$ hold.
As a consequence, we have the desired result.
\hfill \qed

Summarizing the above properties, we have the following theorem.

\begin{theorem}\label{winningIFF}
A vector $\Vec{x} \in \{0,1\}^N$ is a winning vector
	of the $m$-stopping odds problem if and only if
	there exists a unique integer $k \in \{1,2,\ldots, m\}$
	satisfying that
	a pattern vector $\Vec{b}(\Vec{x})=(b_m,b_{m-1},\ldots,b_1)$
	has a left truncated subvector 
	$(b_k,b_{k-1},\ldots ,b_1) \in \Xi_k$.
\end{theorem}	
	
Next, we discuss the probability of win.
Given an index subset $B \subseteq \{1,2,\ldots ,N\}$
	and a positive integer $b$,
	we define 
\begin{eqnarray*}
f^b(B) &=& \left\{ \begin{array}{cl}
	\displaystyle 
	\sum_{B' \subseteq B, \; |B'|=b} 
	\left( \prod_{i \in B'} r_i \right) & (|B|\geq b), \\
	0									& (|B|< b), 
		\end{array} \right.  \\
\end{eqnarray*}
which is called an {\em elementary symmetric polynomial} defined on 
	$\{r_i \mid i \in B\}$.
We define $f^0(B) = 1$.


\begin{theorem}
Given a threshold strategy 
	${\tt Threshold}(i^{(m)}, \ldots, i^{(2)},i^{(1)})$
	for the $m$-stopping odds problem
	defined on 
	$X_1,X_2,\ldots,X_N$,
	the corresponding probability of win is equal to
\[
	\sum_{k=1}^m 
	\left(
	\left( \prod_{i \in B_k \cup \cdots \cup B_2 \cup B_1 } q_i \right)
	\sum_{(b_k,\ldots, b_1) \in \Xi_k}
		\left(
			f^{b_k}(B_k) f^{b_{k-1}}(B_{k-1}) \cdots f^{b_1}(B_1)
		\right)
	\right).
\]
\end{theorem}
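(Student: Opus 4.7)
The plan is to deduce the formula directly from Corollary~\ref{winningcorollary} by turning the event ``$\Vec{X}$ is winning'' into a disjoint union of block-wise events, and then using independence of $X_1,\ldots,X_N$ to factor the probability of each such event across the blocks $B_1,\ldots,B_m$.

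First, I would invoke Corollary~\ref{winningcorollary} to write
\[
\Pr[\text{win}]
=\sum_{k=1}^m \Pr\bigl[\text{a left truncated subvector of } \Vec{b}(\Vec{X}) \text{ lies in } \Xi_k\bigr],
\]
where the decomposition is disjoint by the uniqueness clause of the corollary. Since a length-$k$ left truncated subvector of $\Vec{b}(\Vec{X})$ is exactly $(b_k(\Vec{X}),\ldots,b_1(\Vec{X}))$ with $b_j(\Vec{X})=\sum_{i\in B_j} X_i$, each inner event further decomposes as the disjoint union, over $(b_k,\ldots,b_1)\in\Xi_k$, of the events $\bigcap_{j=1}^{k}\{\sum_{i\in B_j}X_i=b_j\}$. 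Note in particular that no condition is imposed on the variables in $B_{k+1},\ldots,B_{m+1}$, so these need not be summed over.

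Next, I would use the independence of the $X_i$ together with the fact that the blocks $B_1,\ldots,B_m$ are pairwise disjoint to factor:
\[
\Pr\!\left[\bigcap_{j=1}^{k}\Bigl\{\sum_{i\in B_j}X_i=b_j\Bigr\}\right]
=\prod_{j=1}^k \Pr\!\left[\sum_{i\in B_j}X_i=b_j\right].
\]
For each factor, I would enumerate the subset $B_j'\subseteq B_j$ of indices taking value $1$, obtaining
\[
\Pr\!\left[\sum_{i\in B_j}X_i=b_j\right]
=\sum_{\substack{B_j'\subseteq B_j\\ |B_j'|=b_j}}\prod_{i\in B_j'}p_i\prod_{i\in B_j\setminus B_j'}q_i
=\Bigl(\prod_{i\in B_j}q_i\Bigr)\sum_{\substack{B_j'\subseteq B_j\\ |B_j'|=b_j}}\prod_{i\in B_j'}r_i
=\Bigl(\prod_{i\in B_j}q_i\Bigr)f^{b_j}(B_j),
\]
using $r_i=p_i/q_i$ and the definition of $f^{b_j}(B_j)$; the case $b_j=0$ matches the convention $f^{0}(B_j)=1$.

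Finally, I would pull the $q_i$-product outside the sum over $(b_k,\ldots,b_1)\in\Xi_k$: the collective prefactor becomes $\prod_{j=1}^{k}\prod_{i\in B_j}q_i=\prod_{i\in B_k\cup\cdots\cup B_1}q_i$ (blocks are disjoint), leaving exactly the stated expression. The only subtlety worth watching for is the disjointness of the decomposition over $k$, which is precisely what the set $\Xi_k$ is designed to enforce; beyond that, the argument is a bookkeeping exercise and I do not anticipate a genuine obstacle.
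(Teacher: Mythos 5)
Your proposal is correct and follows essentially the same route as the paper: decompose the win event via the uniqueness clause of Corollary~\ref{winningcorollary} into disjoint block-count events indexed by $\Xi_k$, then factor each probability across the disjoint blocks by independence and identify each factor as $\bigl(\prod_{i\in B_j}q_i\bigr)f^{b_j}(B_j)$. Your write-up merely spells out the binomial-type computation of $\Pr[\sum_{i\in B_j}X_i=b_j]$ that the paper leaves implicit in the definition of $f^b(B)$.
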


 \mbox{Proof. }
The definition of $f^b (B)$ directly implies that 
	for any $(b_k,b_{k-1},\ldots,b_1)\in \Int_+^k$, 	
\begin{eqnarray*}
	\Prob \left[ 
		\bigcap_{k'=1}^{k}  
		\left\{ \sum_{i \in B_{k'}}X_i = b_{k'} \right\}
	\right]
	&=& 
	\left( \left( \prod_{i \in B_k}q_i 		\right) f^{b_k}(B_k) \right)
	\cdots
	\left( \left( \prod_{i \in B_1}q_i 		\right) f^{b_1}(B_1) \right) \\
	&=& 
	\left( \prod_{i \in B_k \cup \cdots \cup B_2 \cup B_1 } q_i \right)
	\left(
		f^{b_k}(B_k) f^{b_{k-1}}(B_{k-1}) \cdots f^{b_1}(B_1)
	\right).
\end{eqnarray*}
From the uniqueness appearing in Theorem~\ref{winningIFF}, 
	the probability of win of the threshold strategy 
	${\tt Threshold}(i^{(m)}, \ldots, i^{(2)},i^{(1)})$
	is equal to
\begin{eqnarray*}
\lefteqn{
	\sum_{k=1}^m \sum_{(b_k,\ldots, b_1) \in \Xi_k}
	\Prob \left[ 
			\bigcap_{k'=1}^{k}  
		\left\{ \sum_{i \in B_{k'}}X_i = b_{k'} \right\}
	\right]
}\\
	&=&
	\sum_{k=1}^m 
	\left(
	\left( \prod_{i \in B_k \cup \cdots \cup B_2 \cup B_1 } q_i \right)
	\sum_{(b_k,\ldots, b_1) \in \Xi_k}
		\left(
			f^{b_k}(B_k) f^{b_{k-1}}(B_{k-1}) \cdots f^{b_1}(B_1)
		\right)
	\right). \\[-5ex]
\end{eqnarray*}
\hfill \qed
\smallskip

\noindent
For example, when $m=3$, 
	the set $\Xi_3$ of winning patterns 
	includes four vectors
	$\Xi_3=\{(1,0,0), (0,2,0), (0,1,2), (0,0,3)\}$
	and thus the probability of win is equal to
\[
\begin{array}{l}
\displaystyle
	\left( \prod_{i \in B_1} q_i \right)
	\left( \sum_{i \in B_1} r_i \right)
+	\left( \prod_{i \in B_2 \cup B_1} q_i \right)
	\left( \sum_{i \in B_2} r_i 
+ 	\sum_{\{i_1,i_2\} \subseteq B_1} r_{i_1} r_{i_2} \right) \\
\displaystyle
+	\left( \prod_{i \in B_3 \cup B_2 \cup B_1} q_i \right)
	\left(
	\begin{array}{l}
	\displaystyle
		\sum_{i \in B_3} r_i 
	+ 	\sum_{\{i_1,i_2 \} \subseteq B_2} r_{i_1} r_{i_2} \\
	\displaystyle
	+	\left( \sum_{i \in B_2}r_i \right)
		\left(\sum_{\{i_1,i_2\} \subseteq B_1} r_{i_1}r_{i_2}\right)
	+	\sum_{\{i_1,i_2,i_3\} \subseteq B_1} r_{i_1}r_{i_2}r_{i_3}
	\end{array}
	\right). \\
\end{array}
\]

\section{Lower Bounds.}

In this section, 
	we discuss a lower bound of the probability of win.


First, we discuss a solution vector 
	$(\lambda_1, \lambda_2, \ldots, \lambda_m)$ 
	of an equality system;
\begin{equation} \label{EQSystem}
\sum_{(b_k,\ldots ,b_1) \in \Xi_k}
	\left( 
	\frac{\lambda_{k  }^{b_k}    }{b_k!    } 
	\frac{\lambda_{k-1}^{b_{k-1}}}{b_{k-1}!} 
	\cdots 
	\frac{\lambda_{1  }^{b_1    }}{b_1!} \right)=1
	\;\;\;\;\; (k\in \{1,2,\ldots, m\}),
\end{equation}
which plays an important role in this paper.
Let us begin with small examples.
The description of $ \Xi_1, \Xi_2, \Xi_3$ and $\Xi_4$ 
	(see (\ref{XI1})--(\ref{XI4})) implies that 
	$(\lambda_1, \lambda_2, \lambda_3, \lambda_4)$
	is a solution of the following system;	
\begin{eqnarray*}
  \frac{\lambda_1^1}{1!} &=& 1, \\
  \frac{\lambda_2^1 \lambda_1^0}{1!\; 0!}
+ \frac{\lambda_2^0 \lambda_1^2}{0!\; 2!} &=& 1, \\
  \frac{\lambda_3^1 \lambda_2^0 \lambda_1^0}{1!\; 0!\; 0!}
+ \frac{\lambda_3^0 \lambda_2^2 \lambda_1^0}{0!\; 2!\; 0!}
+ \frac{\lambda_3^0 \lambda_2^1 \lambda_1^2}{0!\; 1!\; 2!}
+ \frac{\lambda_3^0 \lambda_2^0 \lambda_1^3}{0!\; 0!\; 3!} &=& 1, \\[2ex]
  \frac{\lambda_4^1 \lambda_3^0 \lambda_2^0 \lambda_1^0}{1!\; 0!\; 0!\; 0!}
+ \frac{\lambda_4^0 \lambda_3^2 \lambda_2^0 \lambda_1^0}{0!\; 2!\; 0!\; 0!}
+ \frac{\lambda_4^0 \lambda_3^1 \lambda_2^2 \lambda_1^0}{0!\; 1!\; 2!\; 0!}
+ \frac{\lambda_4^0 \lambda_3^1 \lambda_2^1 \lambda_1^2}{0!\; 1!\; 1!\; 2!}
+ \frac{\lambda_4^0 \lambda_3^1 \lambda_2^0 \lambda_1^3}{0!\; 1!\; 0!\; 3!} \qquad \\
+ \frac{\lambda_4^0 \lambda_3^0 \lambda_2^3 \lambda_1^0}{0!\; 0!\; 3!\; 0!}
+ \frac{\lambda_4^0 \lambda_3^0 \lambda_2^2 \lambda_1^2}{0!\; 0!\; 2!\; 2!}
+ \frac{\lambda_4^0 \lambda_3^0 \lambda_2^1 \lambda_1^3}{0!\; 0!\; 1!\; 3!}
+ \frac{\lambda_4^0 \lambda_3^0 \lambda_2^0 \lambda_1^4}{0!\; 0!\; 0!\; 4!} &=& 1.
\end{eqnarray*}

\noindent
The above system has a solution 
	$(\lambda_1, \lambda_2, \lambda_3, \lambda_4)
	=(1, 1/2, 11/24, 505/1152)$.
(Our theorem appearing below shows that $e^{-1}+e^{-3/2}+e^{-47/24}+e^{-2761/1152}$ 
	gives a lower bound of the probability of win.)
Here we note that 
	$\lambda_5=209519/491520$ and 
	$\lambda_6=49081919440723/117413668454400$.
Values $\lambda_7, \ldots , \lambda_{10}$ 
	appear in our mimeo~\cite{MATSUI2012}.
First, we show the uniqueness of a solution.
The following property is discussed 
	by Gilbert and Mosteller~\cite{GM1966}
	in a setting of the secretary problem.
	
\begin{lemma}\label{unitvector} 
For any $k \in \{1,2,\ldots, m\}$,
	$\Xi_k$ includes the unit $k$-vector $\Vec{e}=(1,0,0,\ldots,0)$ and
	every vector $(b_k,b_{k-1},\ldots ,b_1) \in \Xi_k \setminus \{\Vec{e}\}$
	satisfies $b_k=0$.
\end{lemma}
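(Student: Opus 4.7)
The plan is to prove the two claims separately: that $\Vec{e} = (1,0,\ldots,0)$ lies in $\Xi_k$, and that it is the unique element of $\Xi_k$ with nonzero leading entry.

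For the first claim, I would check directly from the definition of $\widehat{\Xi}_k$ that $\Vec{e} \in \widehat{\Xi}_k$: every prefix sum of $\Vec{e}$ equals $1$, so $1 \leq j$ holds for $j = 1,\ldots,k$, and the total sum satisfies $1 \geq 1$. Then for each $k' < k$ the length-$k'$ left truncated subvector of $\Vec{e}$ is the all-zero vector, which violates the requirement $b_{k'} + \cdots + b_1 \geq 1$ in the definition of $\widehat{\Xi}_{k'}$. Hence no proper left truncated subvector of $\Vec{e}$ lies in any $\widehat{\Xi}_{k'}$ with $k' < k$, giving $\Vec{e} \in \Xi_k$.

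For the second claim I argue by contradiction. Suppose $\Vec{b} = (b_k, b_{k-1}, \ldots, b_1) \in \Xi_k$ has $b_k \neq 0$; since the first constraint of $\widehat{\Xi}_k$ is $b_k \leq 1$, we must have $b_k = 1$. Assume further that $b_j > 0$ for some $j < k$ (otherwise $\Vec{b} = \Vec{e}$ already). I then consider the length-$(k-1)$ left truncated subvector $\Vec{b}' = (b_{k-1}, \ldots, b_1)$ and aim to show $\Vec{b}' \in \widehat{\Xi}_{k-1}$, which contradicts $\Vec{b} \in \Xi_k$ since $\Vec{b}'$ is a proper left truncated subvector of $\Vec{b}$.

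The key observation, which is the heart of the proof, is that the upper-bound inequality $b_k + b_{k-1} + \cdots + b_{k-j+1} \leq j$ of $\widehat{\Xi}_k$, after substituting $b_k = 1$ and subtracting $1$ from both sides, becomes $b_{k-1} + \cdots + b_{k-j+1} \leq j-1$, which is the $(j{-}1)$-th upper-bound inequality of $\widehat{\Xi}_{k-1}$ for $\Vec{b}'$. Thus every upper-bound constraint of $\widehat{\Xi}_{k-1}$ is satisfied by $\Vec{b}'$, and the lower-bound condition $b_{k-1} + \cdots + b_1 \geq 1$ follows from the assumption that some $b_j > 0$ with $j \leq k-1$. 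So $\Vec{b}' \in \widehat{\Xi}_{k-1}$, yielding the desired contradiction. Since this observation is essentially bookkeeping on how the inequality system for $\widehat{\Xi}_k$ shifts under fixing the leading coordinate, I do not anticipate a serious obstacle — the lemma is really a translation between the $\widehat{\Xi}$-systems at two adjacent levels.
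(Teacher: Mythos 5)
Your proposal is correct and follows essentially the same route as the paper: verify $\Vec{e}\in\widehat{\Xi}_k$ with all proper left truncated subvectors zero (hence excluded from every $\widehat{\Xi}_{k'}$), and for the second claim use $b_k\le 1$ to force $b_k=1$, so that the nonzero truncation $(b_{k-1},\ldots,b_1)$ lands in $\widehat{\Xi}_{k-1}$, contradicting membership in $\Xi_k$. The paper's proof merely leaves implicit the shift of the upper-bound inequalities that you spell out.
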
	

 \mbox{Proof. }
It is obvious that $\Vec{e} \in \Xi_k$.
Conversely, if a vector 
	$\Vec{b}=(b_k, \ldots ,b_1) \in \Xi_k$
	satisfies $b_k \neq 0$, 
	then the index $k^*$ appearing in the definition of $\Xi_k$
	satisfies $k^*=k$ and thus  $\Vec{b}=\Vec{e}$.
\hfill \qed 
\smallskip

The above lemma says that 
	the $k$-th equality of~(\ref{EQSystem}) includes 
	$k$ variables $\lambda_k, \lambda_{k-1}, \ldots, \lambda_1$ and
	is a linear equality with respect to $\lambda_k$.
Thus, the equality system~(\ref{EQSystem}) has a unique solution.

\begin{theorem}\label{PositiveSol}
	Equality system~\mbox{\rm (\ref{EQSystem})}
	has a unique solution $(\lambda_1, \lambda_2, \ldots, \lambda_m)$
	satisfying $\lambda_k>0$ for every $k\in \{1,2,\ldots, m\}$.
\end{theorem}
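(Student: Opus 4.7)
The plan is to split the argument into three stages: first, reduce the system~(\ref{EQSystem}) to a triangular recursion using Lemma~\ref{unitvector}, yielding uniqueness; second, verify the upper bound $\lambda_k\le 1$ by an easy induction; and third, establish strict positivity $\lambda_k>0$ by a more delicate induction, where the main obstacle lies.

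For the first stage, I would apply Lemma~\ref{unitvector}: the only vector in $\Xi_k$ with $b_k\neq 0$ is $\Vec{e}=(1,0,\ldots,0)$, contributing the monomial $\lambda_k$ to the left-hand side of the $k$-th equation, while every other vector in $\Xi_k$ has $b_k=0$ and hence its monomial depends only on $\lambda_1,\ldots,\lambda_{k-1}$. Collecting the remainder into a polynomial $R_k(\lambda_1,\ldots,\lambda_{k-1})$ with nonnegative rational coefficients, the $k$-th equation reduces to $\lambda_k=1-R_k(\lambda_1,\ldots,\lambda_{k-1})$. Starting from $\lambda_1=1$ (forced by the $k=1$ equation), each subsequent $\lambda_k$ is then uniquely determined by this triangular recursion, giving the uniqueness claim. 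For the second stage, a trivial induction shows $\lambda_k\le 1$: nonnegativity of the coefficients and of $\lambda_j$ for $j<k$ forces $R_k\ge 0$, hence $\lambda_k=1-R_k\le 1$.

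For the third stage (strict positivity), I would proceed by induction on $k$, the base case $\lambda_1=1>0$ being immediate. For the inductive step, assuming $\lambda_j>0$ for all $j<k$, the goal is $R_k(\lambda_1,\ldots,\lambda_{k-1})<1$. I would use the following probabilistic interpretation: let $Z_1,\ldots,Z_{k-1}$ be independent Poisson random variables with respective parameters $\lambda_1,\ldots,\lambda_{k-1}$, and set $S_j=\lambda_1+\cdots+\lambda_j$. The inductive hypothesis applied to (\ref{EQSystem}) for indices $k'<k$ translates to $\Prob[(Z_{k'},\ldots,Z_1)\in\Xi_{k'}]=e^{-S_{k'}}$, while a direct computation based on Lemma~\ref{unitvector} gives $\Prob[(0,Z_{k-1},\ldots,Z_1)\in\Xi_k\setminus\{\Vec{e}\}]=e^{-S_{k-1}}R_k$. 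By Corollary~\ref{winningcorollary}, these events, together with the all-zero event $\{Z_{k-1}=\cdots=Z_1=0\}$ of probability $e^{-S_{k-1}}$, are pairwise disjoint in the $(k-1)$-dimensional Poisson space, so their probabilities sum to at most~$1$.

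The main obstacle is then to convert this disjointness bound into the strict inequality $R_k<1$. The raw bound alone is not tight enough in general, so one must identify further disjoint ``unrecoverable losing'' patterns---for example, those of the form $(b_{k-1},0,\ldots,0)$ with $b_{k-1}\ge 2$, which lie in no $\Xi_{k'}$ for $k'\le k$ and contribute Poisson probability proportional to $\lambda_{k-1}^{b_{k-1}}/b_{k-1}!$. A careful combinatorial enumeration of such patterns, exploiting the detailed structure of $\Xi_k$ and $\widehat{\Xi}_k$ coming from Lemma~\ref{unitvector} and Corollary~\ref{winningcorollary}, is the technical heart of the argument and is where strict positivity $\lambda_k>0$ ultimately follows.
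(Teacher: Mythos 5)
Your first stage (uniqueness via Lemma~\ref{unitvector} and the triangular recursion $\lambda_k=1-R_k(\lambda_1,\ldots,\lambda_{k-1})$) is correct and is exactly what the paper does. The problem is the third stage: you have correctly located the heart of the theorem --- showing $R_k<1$ --- but you have not proved it, and the partial bound you do derive is provably insufficient. The pairwise-disjointness of the events $\{(Z_{k'},\ldots,Z_1)\in\Xi_{k'}\}$ ($k'<k$), $\{(0,Z_{k-1},\ldots,Z_1)\in\Xi_k\}$ and $\{Z_{k-1}=\cdots=Z_1=0\}$ gives only
\[
R_k \;\le\; e^{S_{k-1}}\Bigl(1-\sum_{k'=1}^{k-1}e^{-S_{k'}}\Bigr)-1,
\]
and this right-hand side exceeds $1$ already for $k=7$: from Tables~2 and~3 one has $\sum_{k'=1}^{6}e^{-S_{k'}}\approx 0.92167$ and $e^{-S_6}\approx 0.03912$, so the bound is about $2.00$. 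Your proposed extra class of ``unrecoverable losing'' patterns does not rescue this: first, $(b_{k-1},0,\ldots,0)$ with $b_{k-1}=2$ is \emph{not} unrecoverable, since $(0,2,0,\ldots,0)\in\Xi_k$ (it appears in $\Xi_3,\Xi_4,\Xi_5$ in the paper's lists), so only $b_{k-1}\ge 3$ qualifies; second, adding the probability of that corrected class (roughly $e^{-S_{k-1}}(e^{\lambda_{k-1}}-1-\lambda_{k-1}-\lambda_{k-1}^2/2)\approx 0.013\,e^{-S_{k-1}}$) still leaves the bound above $1$ for $k=8$. So the enumeration you defer to is not a routine finishing touch; as the margins shrink to zero with $k$, you would need an essentially complete accounting of the losing patterns, i.e.\ the whole difficulty remains.

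The paper circumvents this entirely. It encodes vectors of $\widehat{\Xi}_k$ as lattice paths in a directed graph $G_m$, assigns edge $((k+1,k'),(k,k''))$ the weight $\lambda_k^{k'-k''}/(k'-k'')!$, and lets $\gamma(k,k')$ be the total weight of paths from $(k,k')$ to $(0,0)$. Lemma~\ref{sum1} (a first-apex decomposition of $\Xi_k^+$, proved by induction) shows that system~(\ref{EQSystem}) is equivalent to $\gamma(k,k)=1$ for all $k$; splitting the first step of a path from $(k,k)$ then yields the exact identity $\lambda_k=\gamma(k-1,k-1)-\gamma(k-1,k)$, and Theorem~\ref{gamma} proves by induction the strict monotonicity $1=\gamma(k,k)>\gamma(k,k+1)>\cdots>\gamma(k,m)$, from which $\lambda_k>0$ drops out with no enumeration of losing patterns and no numerical slack to control. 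If you want to keep your probabilistic framing, the quantity $e^{-S_{k-1}}\gamma(k-1,k)$ is precisely the probability mass you were trying to bound, and the monotonicity of $\gamma(k-1,\cdot)$ is the structural fact your approach is missing.
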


A proof is given in appendix section.

\medskip

	
Given a pair of Bernoulli sequences ${\cal X}$ and ${\cal X}'$, 
	we denote ${\cal X} \cdot \hspace{-0.8ex} \succ {\cal X}'$, 
	if and only if 
	$\exists i \in \{1,2,\ldots, N\}$, 
	satisfying 
  ${\cal X}=(X_1, X_2, \ldots , X_i, \ldots , X_N)$,
	${\cal X}'=
   (X_1,X_2,\ldots,X_{i-1}, X'_i, X''_i, X_{i+1}, \ldots, X_N)$,
	and
	$\Prob [X_i=0]=\Prob[X'_i=0] \Prob[X''_i=0]$.
We say that a Bernoulli sequences ${\cal X}''$ is a {\em subdivision}
	of ${\cal X}$, denoted by ${\cal X} \succeq {\cal X}''$, 
	if and only if either ${\cal X}$ is equivalent to ${\cal X}''$ or  
	there exists a finite sequence 
	${\cal X} \cdot \hspace{-0.8ex} \succ  {\cal X}^1 
	\cdot \hspace{-0.8ex} \succ  {\cal X}^2
	\cdot \hspace{-0.8ex} \succ  \cdots 
	\cdot \hspace{-0.8ex} \succ  {\cal X}''$.

Now we consider the $m$-stopping odds problems
	defined on ${\cal X}$ and ${\cal X}'$ satisfying ${\cal X} \succeq {\cal X}'$.
	
\begin{lemma}\label{subdivisionLB}
Let ${\cal X}$ be a Bernoulli sequence and ${\cal X}'$ be a subdivision of ${\cal X}$,
	i.e.,  ${\cal X} \succeq {\cal X}'$.
Odds problems with $m$-stoppings 
	defined on ${\cal X}$ and ${\cal X}'$ satisfies that
	the probability of win of an optimal strategy for ${\cal X}$ 
	is greater than or equal to that of ${\cal X}'$.
\end{lemma}

 \mbox{Proof. }
 Obviously, we only need to consider the case that 
		${\cal X} \cdot \hspace{-0.8ex} \succ  {\cal X}'$.\@
 In the following, we show that there exists a strategy for ${\cal X}$ 
	whose probability of win 
	is equivalent to that of a (fixed) optimal strategy for ${\cal X}'$.
	
 Since ${\cal X} \cdot \hspace{-0.8ex} \succ  {\cal X}'$, 
	there exists an index $i \in \{1,2,\ldots, N\}$, 
	satisfying  ${\cal X}=(X_1, X_2, \ldots X_N)$,
	${\cal X}'=(X_1,X_2,\ldots,X_{i-1}, X'_i, X''_i, X_{i+1}, \ldots, X_N)$,
	and $q=q' q'' $, where 
    $q=\Prob[X_i=0], \; q'=\Prob[X'_i=0], \; q''=\Prob[X''_i=0] $.
	
  When we observe variables 	$(X_1,X_2,\ldots,X_{i-1})$ in ${\cal X}$, 
	we apply the optimal strategy for ${\cal X}'$ and halt just before observing $X_i$.

\noindent 
(1) If $X_i=0$,  then we do not select $X_i$ and put $(X'_i, X''_i)=(0,0)$. 
	We re-start the optimal strategy for ${\cal X}'$
	and apply to observed sequence of random variables  $(X_{i+1}, \ldots , X_N)$.

\noindent
(2)  In case that $X_i=1$, we choose $(X'_i, X''_i) \in \{(1,0), (0,1), (1,1) \}$ 
	according to the following conditional probabilities; 
\[
		(X'_i, X''_i)= \left\{ \begin{array}{llr}
		(1,0)& \mbox{with probability }& (1-q')q''/(1-q'q''), \\
		(0,1)& \mbox{with probability }& q'(1-q'')/(1-q'q''), \\
		(1,1)& \mbox{with probability }& (1-q')(1-q'')/(1-q'q''). \\
							\end{array} \right.
\]
 We apply the optimal strategy for ${\cal X}'$ to the chosen vector of $(X'_i, X''_i)$. 
 If the optimal strategy for ${\cal X}'$ selects 
	the last success in $(X'_i, X''_i)$, 
	we also select $X_i$ in ${\cal X}$  and vise verse.
 We re-start the optimal strategy for ${\cal X}'$
	and apply to observed sequence of random variables  $(X_{i+1}, \ldots , X_N)$.
 
 It is clear that, the strategy for ${\cal X}$, described above, 
	selects the last success of ${\cal X}$, 
	if and only if the optimal strategy for ${\cal X}'$ 
	selects the last success of ${\cal X}'$.
 Thus, the probability of win of the above strategy 
	is equivalent to that of the optimal strategy for ${\cal X}'$.
\hfill \qed



\begin{theorem}\label{mainLB}
Let $(\lambda_1, \lambda_2,\ldots, \lambda_m)$ be a unique solution
	of equality system~\mbox{\rm (\ref{EQSystem})}.
If a Bernoulli sequence
	$X_1,X_2,\ldots, X_N$ satisfies 
$
	\prod_{i=1}^N q_i  < e^{-\sum_{k=1}^m \lambda_k}, 
$
	then the probability of win of an optimal strategy
	for the $m$-stopping odds problem defined on $X_1,X_2,\ldots, X_N$
	is greater than or equal to 
\[ 
	\sum_{k=1}^m e^{-\sum_{k'=1}^k \lambda_{k'}}
	= e^{-\lambda_1}
	+ e^{-(\lambda_1+\lambda_2)}
	+ e^{-(\lambda_1+\lambda_2+\lambda_3)}
	+\cdots 
	+ e^{-(\lambda_1+\cdots +\lambda_m)}.
\]
\end{theorem}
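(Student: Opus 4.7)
The plan is to exploit the two tools set up in the previous section: Corollary~\ref{mimicLBcorollary} reduces the problem to evaluating a threshold strategy on the i.i.d.\ auxiliary sequence $Y^d_1,\ldots,Y^d_L$ (with common failure probability $\qss^{1/d}$ and common odds $\rss^{(d)}$), while Lemma~\ref{Never}, applied with $\lambda:=\sum_{k=1}^m\lambda_k$, guarantees that $L\rss^{(d)}>\sum_{k=1}^m\lambda_k$ for every $d$ sufficiently large. The strategy is therefore to exhibit, for each large $d$, a threshold strategy on the $Y^d$-sequence whose win probability tends, as $d\to\infty$, to the claimed bound $\sum_{k=1}^m e^{-\sum_{k'=1}^k\lambda_{k'}}$.

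For the construction, I would choose the threshold vector $\Vec{j}(d)=(j^{(m)},\ldots,j^{(1)})$ so that the induced blocks $B_1(d),\ldots,B_m(d)$ have sizes $n_k(d):=|B_k(d)|$ with $n_k(d)\,\rss^{(d)}\to \lambda_k$ for every $k$; concretely, $n_k(d):=\lfloor \lambda_k/\rss^{(d)}\rfloor$ does the job, and this is feasible for large $d$ by Lemma~\ref{Never}. Because all $Y^d_j$ share the common odds $\rss^{(d)}$ and failure probability $\qss^{1/d}$, the win-probability formula from Section~\ref{SectionTS} collapses to
\[
	\sum_{k=1}^m \left(\prod_{k'=1}^k (\qss^{1/d})^{n_{k'}(d)}\right)
	\sum_{(b_k,\ldots, b_1)\in \Xi_k}
	\binom{n_k(d)}{b_k}(\rss^{(d)})^{b_k}\cdots \binom{n_1(d)}{b_1}(\rss^{(d)})^{b_1},
\]
which, by Corollary~\ref{mimicLBcorollary}, lower-bounds the optimal win probability on $X_1,\ldots,X_N$.

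The last step is to let $d\to\infty$. Using $\rss^{(d)}\sim 1/(d\eta_*)$ (the estimate already extracted inside the proof of Lemma~\ref{Never}) together with $n_k(d)\,\rss^{(d)}\to \lambda_k$ one obtains the two elementary limits
\[
	\binom{n_k(d)}{b}(\rss^{(d)})^b\ \longrightarrow\ \frac{\lambda_k^b}{b!},
	\qquad
	(\qss^{1/d})^{n_k(d)} = e^{-n_k(d)/(d\eta_*)}\ \longrightarrow\ e^{-\lambda_k}.
\]
Substituting and collecting terms, the displayed lower bound converges to
\[
	\sum_{k=1}^m e^{-\sum_{k'=1}^k \lambda_{k'}}
	\sum_{(b_k,\ldots, b_1)\in \Xi_k}
	\frac{\lambda_k^{b_k}}{b_k!}\cdots \frac{\lambda_1^{b_1}}{b_1!}
	= \sum_{k=1}^m e^{-\sum_{k'=1}^k\lambda_{k'}},
\]
where the last equality is precisely equality system~(\ref{EQSystem}). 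Since the optimal win probability on $X_1,\ldots,X_N$ does not depend on $d$ and dominates the above bound for each $d$, the claim follows.

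The main obstacle is the simultaneous integer-rounding step: one must realize all $m$ asymptotic odds masses $\lambda_k$ within the single budget $L\rss^{(d)}$, which is exactly where the \emph{strict} hypothesis $\prod q_i<e^{-\sum_k\lambda_k}$ is needed—it provides, via Lemma~\ref{Never}, the asymptotic slack $\lambda'-\sum_k\lambda_k>0$ required to accommodate every block. Once this is in place, the rest is routine binomial/exponential asymptotics.
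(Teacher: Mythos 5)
Your proposal is correct and follows essentially the same route as the paper: reduce to the i.i.d.\ sequence $Y^d_1,\ldots,Y^d_L$ via Corollary~\ref{mimicLBcorollary}, use Lemma~\ref{Never} (with $\lambda=\sum_k\lambda_k$) to guarantee room for blocks of odds-mass $\lambda_k$, take block sizes $\approx\lambda_k/\rss^{(d)}$, and pass to the limit $d\to\infty$ where the binomial/exponential asymptotics turn the win-probability formula into $\sum_k e^{-\sum_{k'\le k}\lambda_{k'}}$ via equality system~(\ref{EQSystem}). The only cosmetic difference is that you prescribe the block sizes directly by $\lfloor\lambda_k/\rss^{(d)}\rfloor$ while the paper defines the thresholds by a sum-the-odds criterion, which yields the same sizes up to bounded rounding error.
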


This theorem says that;
	when $m=3,4,5$, our lower bounds are  
	$e^{-1}+e^{-\frac{3}{2}}+e^{-\frac{47}{24}}\geq 0.7321029820$,  
	$e^{-1}+e^{-\frac{3}{2}}+e^{-\frac{47}{24}}+e^{-\frac{2761}{1152}}
		\geq 0.8231206726$, 
	and 
	$e^{-1}+e^{-\frac{3}{2}}+e^{-\frac{47}{24}}+e^{-\frac{2761}{1152}}
		+e^{-\frac{4162637}{1474560}}
		\geq 0.8825499145$, respectively.	
Table~\ref{tablesumlambda} shows our lower bounds
	in cases $m\leq 10$.

\smallskip

 \mbox{Proof. } 
Let ${\cal X}^0=(X_1,X_2,\ldots ,X_N)$ be a given Bernoulli sequence.
We construct a sequences ${\cal X}^1$
	satisfying  ${\cal X}^0 \cdot \hspace{-0.8ex} \succ {\cal X}^1$
	by splitting a variable $X_i$ in ${\cal X}^0$
	which attains the minimum $\min \{q_1,q_2,\ldots , q_N\}$, 
	and introducing two random variables 
	$X'_i, X''_i$ satisfying 
	$\Prob [X'_i=0]=\Prob [X''_i=0] = \sqrt{q_i}$.
Applying the above procedure iteratively, 
	we obtain an infinite sequence 
	${\cal X}^0 	\cdot \hspace{-0.8ex} \succ {\cal X}^1
				\cdot \hspace{-0.8ex} \succ {\cal X}^2
				\cdot \hspace{-0.8ex} \succ \cdots $.
It is clear that the maximum of odds of variables in ${\cal X}^d$ 
	approaches to $0$ as $d \rightarrow \infty$.

Let $({\cal Y}^1, {\cal Y}^2, \ldots )$ 
	be a subsequence of $({\cal X}^1,{\cal X}^2,\ldots )$
	satisfying that the maximum odds of ${\cal Y}^d$ 
	is less than or equal to $2^{-d}$.
We denote the Bernoulli sequence ${\cal Y}^d$ by $(Y^d_1,Y^d_2,\ldots,Y^d_{L_d})$
	and the corresponding odds by $(r^{(d)}_1,r^{(d)}_2,\ldots , r^{(d)}_{L_d})$,
	where  $L_d$ denotes the length of ${\cal Y}^d$.
We introduce a specified threshold strategy 	
	{\tt Threshold}$(j_d^{(m)},j_d^{(m-1)},\ldots,$ $j_d^{(1)})$
	for the $m$-stopping odds problem on $(Y^d_1,Y^d_2,\ldots,Y^d_{L_d})$
	defined by 
\begin{equation}\label{LBthreshold}
	j_d^{(k)}= \min \left\{ j \in \{1,2,\ldots , L_d \} \left| 
		\lambda_1+\lambda_2+ \cdots +\lambda_k > \sum_{j'=j}^{L_d} r^{(d)}_{j'}  
			\right. \right\}, 
\end{equation}
	for each $k \in \{1,2,\ldots ,m\}$.
Theorem~\ref{PositiveSol} implies inequalities
	$1 \leq j_d^{(m)} \leq j_d^{(m-1)} \leq \cdots \leq j_d^{(1)} \leq L_d$
	and thus we can define a corresponding threshold strategy.
Let $\{ B_{m+1}(d),B_m(d),\ldots,B_1(d)\}$ 
	be a partition of index set $\{1,2,\ldots,L_d \}$
	defined by 
\[
B_k(d)= \left\{ \begin{array}{ll}
	\{j \in \{1,2,\ldots,L_d \} \mid j_d^{(1)} \leq j \leq L_d \} 
			& (k=1), \\
	\{j \in \{1,2,\ldots,L_d \} \mid j_d^{(k)} \leq j < j_d^{(k-1)} \} 
			&(1<k \leq m), \\
	\{j \in \{1,2,\ldots,L_d \} \mid 1 \leq j<j_d^{(m)} \}
			&(k=m+1). 
			\end{array} \right.
\]

Here we show that $1< j_{(d)}^m$.
Since ${\cal Y}^d$ is a subdivision of ${\cal X}$, 
	the probabilities of failure 
	$q^{(d)}_i = \Prob [Y^d_i=0]$ and 
	$q_i = \Prob [X_i=0]$
	satisfy  $q^{(d)}_1 q^{(d)}_2 \cdots q^{(d)}_{L_d}=q_1 q_2 \cdots q_N$.
The total sum of odds of ${\cal Y}^d$ satisfies that
\begin{eqnarray*}
	\sum_{i=1}^{L_d} r^{(d)}_i 
	&=&  	\sum_{i=1}^{L_d} \left( \frac{1}{q^{(d)}_i} -1 \right) 
	=   	\sum_{i=1}^{L_d} \left( \frac{1}{q^{(d)}_i} \right) - L_d  
	\geq 	L_d \left( \prod_{i=1}^{L_d} \frac{1}{q^{(d)}_i} \right)^{\frac{1}{L_d}}- L_d 
	=		L_d \left( \frac{1}{\prod_{i=1}^{L_d} q^{(d)}_i} \right)^{\frac{1}{L_d}}- L_d \\
	&=&		L_d \left( \frac{1}{\prod_{i=1}^N q_i} \right)^{\frac{1}{L_d}}- L_d 
	> 		L_d \left( \frac{1}{e^{-\sum_{k=1}^m \lambda_k}} \right)^{\frac{1}{L_d}}- L_d 
	=		L_d \left( \left( e^{\sum_{k=1}^m \lambda_k} \right)^{\frac{1}{L_d}}-1 \right) \\
	&\geq& \lim_{L' \rightarrow \infty} 
		L' \left( \left( e^{\sum_{k=1}^m \lambda_k} \right)^{\frac{1}{L'}}-1 \right) 
	=	\ln \left( e^{\sum_{k=1}^m \lambda_k} \right) = \sum_{k=1}^m \lambda_k.
\end{eqnarray*}
From the above, the total sum of odds of ${\cal Y}^d$ 
	is strictly greater than $ \sum_{k=1}^m \lambda_k$
	and thus $1< j_{(d)}^m$, 
	which implies $1 \in B_{m+1}(d)\neq \emptyset.$
	
\noindent (i)
First, we show that
\begin{equation}\label{LBb1}
	\lim_{d \rightarrow \infty} \sum_{i \in B_k (d)} r^{(d)}_i= \lambda_k  \mbox{ and }
	\lim_{d \rightarrow \infty} | B_k (d) |= + \infty, \;\;\; 
	\forall k \in \{1,2,\ldots, m\}.
\end{equation}
	
Since the odds of ${\cal Y}^d$ is less than $2^{-d}$, 
	block $B_k(d)$ satisfies
\begin{equation}\label{LBb2}
\lambda_k - 2^{-d} \leq \sum_{i \in B_k (d)} r^{(d)}_i \leq \lambda_k+2^{-d}
	\;\; (\forall k \in \{1,2,\ldots, m\})
\end{equation}
and thus, we obtain the first equality in~(\ref{LBb1}).
Since $\displaystyle 
		\lambda_k - 2^{-d} 
		\leq \sum_{i \in B_k(d)} r^{(d)}_i
		\leq |B_k (d)|  2^{-d}$, 
	it is clear that
	$\displaystyle 
		\lim_{d \rightarrow \infty}|B_k (d)| 
		\geq \lim_{d \rightarrow \infty} (2^d \lambda_k -1) = +\infty$.

\noindent (ii)
It is easy to show that for any  $ k \in \{1,2,\ldots, m\}$,
\begin{eqnarray*}
\lim_{d \rightarrow \infty} 
	\prod_{j \in B_k(d)} q^{(d)}_i  
&=& \lim_{d \rightarrow \infty} 
	\prod_{j \in B_k(d)} 
		\left( \frac{1}{1+r^{(d)}_i} \right) 
= 	\lim_{d \rightarrow \infty} 
	\left(
		\prod_{j \in B_k(d)} (1+r^{(d)}_i) 
	\right)^{-1}\\				
&\geq& 
	\lim_{d \rightarrow \infty} 
	\left(
		\frac{
			\sum_{j \in B_k(d)} (1+r^{(d)}_i)
		}{|B_k(d)|}
	\right)^{-|B_k(d)|}
=	\lim_{d \rightarrow \infty} 
	\left(
		1+ \frac{
			\sum_{j \in B_k(d)} r^{(d)}_i
		}{|B_k(d)|}
	\right)^{-|B_k(d)|}		
= e^{-\lambda_k}.
\end{eqnarray*}

\noindent(iii) Third, we show that the elementary symmetric polynomials
		satisfy 
\[ 
	\forall k \in \{1,2,\ldots,m\}, 
	\forall b \in \{0,1,\ldots,m\}, 
		\lim_{d \rightarrow \infty} f^b(B_k(d)) 
		\geq \frac{\lambda_k^b}{b!}.
\] 

When $b=0$, the definition of $f^0(B_k(d))$ says that
	 $f^0(B_k(d))=1=\frac{\lambda_k^0}{0!}$ holds permanently.
Let us consider the cases that $b \geq 1$.
When $d$ is sufficiently large, 
	the size of  $B_k(d)$ is greater than $m$,
	since	$\lim_{d \rightarrow \infty} |B_k(d)| = + \infty$.
Thus, we only need to consider the case that 
	$f^b(B_k(d)) = \sum_{B' \subseteq B_k(d) , \; |B'|=b} 
	\left( \prod_{i \in B'} r^{(d)}_i \right)$.
If we introduce a vector of variables
	$\Vec{r'}=(r'_1, r'_2, \ldots , r'_j)$ where $j=|B_k(d)|$,  
	then the value $f^b(B_k(d))$ is bounded from below 
	by the optimal value of an optimization problem 
\begin{eqnarray} \label{ConcaveMinimization}
	\min \left\{ f^b(\Vec{r}') \left|
		0 \leq r'_i \leq 2^{-d} \; (\forall i \in \{1,2,\ldots , j\}),\;\;
		r'_1+ r'_2+ \cdots + r'_j \geq \lambda_k - 2^{-d}
	\right. \right\}, 
\end{eqnarray}
where 
	$f^b(\Vec{r}') = \sum_{1\leq i_1 < i_2 < \cdots <i_b \leq j} 
		r'_{i_1} r'_{i_2} \cdots r'_{i_b}$ and 
	the last inequality constraint is obtained from~(\ref{LBb2}).
The optimization problem~(\ref{ConcaveMinimization})
	minimizes a continuous function over a bounded closed set 
	(a compact set)
	and has an optimal solution,
	since a given vector of odds $(r^{(d)}_i \mid i \in B_k(d)) $ 
	(arranged in increasing order of indices)
	is feasible for~(\ref{ConcaveMinimization}).
Let  $\Vec{r}^*=(r^*_1, r^*_2, \ldots , r^*_j)$  
	be an optimal solution of~(\ref{ConcaveMinimization}). 
From the symmetry of the objective function and the feasible region, 
	we can assume that $\Vec{r}^*$ satisfies 
	$r^*_1 \geq r^*_2 \geq \cdots \geq r^*_j$.
Now we show that if $r^*_{i+1}>0$,  then $r^*_i=2^{-d}$.
Assume on the contrary that $2^{-d} > r^*_i$ and $r^*_{i+1}>0$ hold.
Then the vector  
	$(r^*_1, r^*_2, \ldots, 
		r^*_i+ \varepsilon, r^*_{i+1}-\varepsilon , r^*_{i+2}, \ldots , r^*_{j})$
	is feasible to~(\ref{ConcaveMinimization}) 
	and
	strictly decreases the objective function value, 
	for some sufficiently small positive $\varepsilon$. 
This contradicts with the optimality of $\Vec{r}^*$.
Thus, the optimal solution $\Vec{r}^*$ is denoted by  
	$( 2^{-d}, 2^{-d}, \ldots, 2^{-d},r'',0,\ldots ,0)$
	where $0 \leq r'' \leq 2^{-d}$.
It is easy to see that $\Vec{r}^*$
	satisfies the last constraint with equality, i.e., 
	$r^*_1+r^*_2+\cdots +r^*_j=\lambda_k - 2^{-d}$. 
Consequently, the number of non-zero elements 
	in $\Vec{r}^*$ is greater than or equal to $2^d\lambda_k-1$.
Thus, we have that
	$\forall b \in \{1,2,\ldots,m\},$
\begin{eqnarray*}
	\lim_{d \rightarrow \infty} f^b(B_k(d)) 
&\geq &
	\lim_{d \rightarrow \infty} f^b(\Vec{r}^*) 
 \geq 
	\lim_{d \rightarrow \infty} 
	\left( 
		\begin{array}{c} 2^d \lambda_k -2 \\ b \end{array}  
	\right) (2^{-d})^b  \\
&\geq &  
	\lim_{d \rightarrow \infty} 
	\frac{(2^d \lambda_k -2 -b)^b}{b!} (2^{-d})^b
	=
	\lim_{d \rightarrow \infty} 
	\frac{(\lambda_k -(2 +b)/2^d )^b}{b!} 
	= \frac{\lambda_k^b}{b!}.
\end{eqnarray*}

\noindent (iv) 
Lastly, we show our lower bound.
As shown in Lemma~\ref{subdivisionLB},
	for any positive integer $d$,  
	the probability of win of any threshold strategy 
	for the $m$-stopping odds problem 
	defined on ${\cal Y}^d$ 
	gives a lower bound of that 
	defined on the given sequence ${\cal X}$. 
Lower bounds derived in (ii) and (iii) directly imply that
	the win probability 
	of {\tt Threshold}$(j_d^{(m)},j_d^{(m-1)},\ldots,j_d^{(1)})$
	gives a lower bound
\[
\begin{array}{l}
\displaystyle
\lim_{d \rightarrow \infty}
	\sum_{k=1}^m 
	\left(
		\left( 
			\prod_{i \in B_k \cup \cdots \cup B_2 \cup B_1 }
			\!\!\!\! q_i 
		\right)
		\sum_{(b_k,\ldots, b_1) \in \Xi_k}\!\!\!\!
		\left( 
			f^{b_k}(B_k(d))  \cdots f^{b_1}(B_1(d))
		\right)
	\right) \\
\displaystyle
\geq \sum_{k=1}^m 
	 \left(
		\left(
			\prod_{k'=1}^k e^{-\lambda_{k'}}
		\right)
		\left(
			\sum_{(b_k,\ldots, b_1) \in \Xi_k}
			\frac{\lambda_{k  }^{b_k}    }{b_k!    } 
			\frac{\lambda_{k-1}^{b_{k-1}}}{b_{k-1}!} 
			\cdots 
			\frac{\lambda_{1  }^{b_1    }}{b_1!} 
		\right) 
	\right)
= \sum_{k=1}^m  e^{- \sum_{k'=1}^k \lambda_{k'}}
\end{array}
\]
where the last equality is obtained from~(\ref{EQSystem}).
\hfill \qed
\smallskip

\begin{table}
\caption{Cumulative sum of entries 
	in $(\lambda_1, \lambda_2, \ldots, \lambda_m)$
	and lower bounds.} \label{tablesumlambda}
\[
\begin{array}{c||rcl|c}
m & \displaystyle \sum_{k=1}^m \lambda_k &&& 
 \displaystyle \sum_{k=1}^m  e^{- \sum_{k'=1}^k \lambda_{k'}} \\
\hline 
1 & 1 		&=& 1 					& 0.3678794411\cdots \\ 
2 & 3/2 	&=& 1.5 				& 0.5910096013\cdots \\
3 & 47/24	&=& 1.958333 \cdots 	& 0.7321029820\cdots \\
4 & 2761/1152&=& 2.396701 \cdots 	& 0.8231206726\cdots \\
5 &	4162637/1474560 &=& 2.822969 \cdots	& 0.8825499145\cdots \\
\hline
6 & 380537052235603/117413668454400 &=& 3.240994 \cdots
									& 0.9216748810\cdots \\[+1.5ex]
7 & \displaystyle 
\frac{705040594914523588948186792543}%
{193003573558876719588311040000}
	&=& 3.652992 \cdots				& 0.9475883491\cdots \\[+2.5ex]
8 & \displaystyle \frac{
\begin{array}{r}
3025002101 \\[-0.5ex]
77484374840641189918370275991590974715547528765249 \\[-0.5ex] 
\end{array}}%
{
\begin{array}{r}
745007588 \\[-0.5ex]
12993473612938854416966977838930799571763200000000 \\[-0.5ex]
\end{array}
}
\!\!&=& 4.060364 \cdots				&  0.9648310882\cdots \\[4.5ex]
9  & \displaystyle \frac{ 
\begin{array}{r}
4955429267826902943  \\[-0.5ex] 
22991702889058732983678465397265103848504031927299  \\[-0.5ex] 
12522937262239403638817695466470734534217406992001  \\[-0.5ex] 
\end{array}}%
{
\begin{array}{r} 
1110072612742364945  \\[-0.5ex]  
47845493213273623476317581768829551455545915219181  \\[-0.5ex] 
23315624957195621435513013513748480000000000000000  \\[-0.5ex]
\end{array}
}
\!\!&=&4.464059 \cdots				&  0.9763466188\cdots  \\[+7.5ex]
10 & \displaystyle \frac{
\begin{array}{r}
11989289035379212035246168789525873032  \\[-0.5ex]
80849078486814692748999764352069320540924554366342  \\[-0.5ex]
20167531781129657310860185112917637526070528528590  \\[-0.5ex]
30333616681207477435841890935057636581590554638168  \\[-0.5ex]
66245450807944253110095088765765115912740477984001  \\[-0.5ex]
\end{array}}%
{
\begin{array}{r}
2464522411121321065440235649497935699  \\[-0.5ex]
29104324006809890312129945082092425080632748539150  \\[-0.5ex]
55027199035483748007701106129537805807977992169375  \\[-0.5ex]
27132513498662936774805285136477456358447232852772  \\[-0.5ex]
52323755961404620800000000000000000000000000000000  \\[-0.5ex]
\end{array}
}
\!\!&= &4.864751 \cdots				& 0.9840603638\cdots \\
\hline 
\end{array}
\]
\end{table}



\section{Secretary Problem.} \label{section:CSP}

In this section, we show that an optimal strategy 
	for the secretary problem attains our lower bounds 
	appearing in Theorem~\ref{mainLB}.
We discuss a sequence of $0/1$ random variables 
$X_2, X_3, \ldots , X_N$ satisfying
$ \Prob [X_i=1]=1/i$, for any $i \in \{2,3,\ldots,N\}$.
In this section, $q_i$ denotes the probability of failure $1-1/i$ 
	and $r_i$ denotes the odds $1/(i-1)$ of $X_i$, 
	for all \mbox{$i \in \{1,2,\ldots ,N\}$}.
Gilbert and Mosteller~\cite{GM1966} showed that
	an optimal strategy 
	for the secretary problem with $m$-stoppings 
	is attained by a threshold strategy.

First, we show some properties
	related to the optimal threshold strategy.

\begin{lemma} \label{CSP}
Consider an $m$-stopping secretary problem
	defined on Bernoulli sequence $X_2, X_3, \ldots , X_N$  
	satisfying $\Prob [X_i=1]=1/i$ 
	$(\forall i \in \{2,3,\ldots,N\})$.
We denote an optimal threshold strategy by
	${\tt Threshold} (i_N^{(m)},$ $i_N^{(m-1)}, \ldots, i_N^{(1)})$,
	and define a block partition 
	$\{ B_{m+1}(N),B_m(N),\ldots,B_1(N)\}$
	of index set $\{2,3,\ldots,N\}$ by
\[
B_k(N)= \left\{ \begin{array}{ll}
	\{i \in \{2,3,\ldots,N \} \mid i_N^{(1)} \leq i \leq N \} 
			& (k=1), \\
	\{i \in \{2,3,\ldots,N \} \mid i_N^{(k)} \leq i < i_N^{(k-1)} \} 
			&(1<k \leq m), \\
	\{i \in \{2,3,\ldots,N \} \mid 2 \leq i<i_N^{(m)} \}
			&(k=m+1).
			\end{array} \right.
\]
	
Then, for each $k \in \{1,2,\ldots,m\}$,
	the following properties hold;
	\mbox{
	{\rm (i)}~$\lim_{N \rightarrow \infty} i_N^{(k)}= + \infty$, 
	}
	{\rm (ii)}~$\lim_{N \rightarrow \infty} 
			\sum_{i \in B_k(N)} r_i=\lambda_k$,
	{\rm (iii)}~$\lim_{N \rightarrow \infty} 
			\prod_{i \in B_k(N)}q_i = e^{-\lambda_k}$, 
	and
	{\rm (iv)}~$\lim_{N \rightarrow \infty} f^b(B_{k}(N) )
			= \frac{\lambda_{k}^b}{b!} $ $\;\;\; 			
		(\forall b \in \{0,1,\ldots,m\})$  
	where $(\lambda_1,\lambda_2,\ldots,\lambda_m)$ 
	is a unique solution of~\mbox{\rm (\ref{EQSystem})}.
\end{lemma}

 \mbox{Proof. }
It is well-known that for any $ k \in \{1,2,\ldots , m\}$, 
	the threshold strategy 
	${\tt Threshold} (i_N^{(k)},$ $ i_N^{(k-1)}, \ldots, i_N^{(1)})$
	is also optimal 
	to the $k$-stopping secretary problem~\cite{GM1966}. 

In the following, we show desired properties by induction on $k$.
When $k=1$, the problem becomes the classical secretary problem, 
	and properties (i)$,\ldots,$(iv) are well-known
	(see~\cite{GM1966} for example). 

Now, we begin a discussion of the $k$-th induction step 
	(where $k\leq m$) under an assumption that 
	for any $k' \in \{1,2,\ldots, k-1 \}$,
	properties (i)$,\ldots,$(iv) hold.	
Let $\Vec{e}$ be a unit $k$-vector $(1,0,0,\ldots,0)$.
Lemma~\ref{unitvector} says that 
	$\Vec{e} \in \Xi_k$ and 
	every vector $(b_k,b_{k-1},\ldots,b_1) \in \Xi_k \setminus \{\Vec{e}\}$
	satisfies $b_k=0$.
Thus, the induction hypothesis (iv) 
	and the definition of equality system~(\ref{EQSystem}) imply that
\begin{eqnarray}  \hspace{5ex}
\nonumber
\lefteqn{
	\lim_{N \rightarrow \infty}
	\left( \sum_{(b_k,\ldots, b_1) \in \Xi_k \setminus \{{\small \Vec{e}} \}}
		\left(
			f^{b_{k-1}}(B_{k-1}(N)) f^{b_{k-2}}(B_{k-2}(N)) 
			\cdots f^{b_1}(B_1(N))
		\right)
	\right) } \\ 
 &=&
	\sum_{(b_k,\ldots ,b_1) \in \Xi_k \setminus \{{\small \Vec{e}}\}}
	\left( 
		\frac{\lambda_{k-1}^{b_{k-1}}}{b_{k-1}!    } 
		\frac{\lambda_{k-2}^{b_{k-2}}}{b_{k-2}!} 
		\cdots 
		\frac{\lambda_{1  }^{b_1    }}{b_1!} 
	\right)  
=
	\sum_{(b_k,\ldots ,b_1) \in \Xi_k \setminus \{{\small \Vec{e}}\}}
	\left( 
 		\frac{\lambda_{k  }^{0      }}{0!      } 
		\frac{\lambda_{k-1}^{b_{k-1}}}{b_{k-1}!    } 
		\frac{\lambda_{k-2}^{b_{k-2}}}{b_{k-2}!} 
		\cdots 
		\frac{\lambda_{1  }^{b_1    }}{b_1!} 
	\right) 
  =	1-\lambda_k.  \label{tightlambdaCSP}
\end{eqnarray}

Now we introduce a threshold strategy
	{\tt Threshold}$(i, i_N^{(k-1)}, i_N^{(k-2)}, \ldots, i_N^{(1)})$
	$\;\;\; (\forall i \in \{2,3,\ldots, i_N^{(k-1)}\})$
	for the $k$-stopping secretary problem 
	and employ a one-stage look-ahead approach~\cite{ANO2001}.
Let $\Pwin_N (k, i)$ be a probability of win 
	of the threshold strategy 
	{\tt Threshold}$(i, i_N^{(k-1)}, i_N^{(k-2)}, \ldots, i_N^{(1)})$.

A difference of a pair  $\Pwin_N (k, i-1)$ and $\Pwin_N (k, i)$
satisfies
\begin{eqnarray}
\lefteqn{
       \Pwin_N (k, i-1) - \Pwin_N (k, i)
}  \nonumber \\ 
&=&
       \left( \prod_{i'=i-1}^{N} q_{i'} \right)
       \left( \sum_{i' =i-1}^{i_N^{(k-1)}-1} r_{i'} +
               \sum_{(b_k,\ldots, b_1) \in \Xi_k \setminus \{{\small \Vec{e}}\}}
               \left(
                       f^{b_{k-1}}(B_{k-1}(N)) \cdots f^{b_1}(B_1(N))
               \right)
       \right) \nonumber \\
&& -
       \left( \prod_{i'=i}^{N} q_{i'} \right)
       \left( \sum_{i'=i}^{i_N^{(k-1)}-1}r_{i'} +
               \sum_{(b_k,\ldots, b_1) \in \Xi_k \setminus \{{\small \Vec{e}}\}}
               \left(
                       f^{b_{k-1}}(B_{k-1}(N)) \cdots f^{b_1}(B_1(N))
               \right)
       \right) \nonumber \\
&=& \left( \prod_{i'=i}^{N} q_{i'} \right)
       \left(
       \begin{array}{l}
               \displaystyle
               ( q_{i-1}-1 ) \sum_{i'=i}^{i_N^{(k-1)}-1}r_{i'}
                       + q_{i-1}r_{i-1} \\[1ex]
               \displaystyle
               + (q_{i-1} -1)
               \sum_{(b_k,\ldots, b_1) \in \Xi_k \setminus \{{\small \Vec{e}}\}}
               \left(
                       f^{b_{k-1}}(B_{k-1}(N)) \cdots f^{b_1}(B_1(N))
               \right)
       \end{array}
       \right) \nonumber \\
&=& \left( \prod_{i'=i}^{N} q_{i'} \right)
       (1- q_{i-1})
       \left(
       \begin{array}{l}
               \displaystyle
               - \sum_{i'=i}^{i_N^{(k-1)}-1}r_{i'}  + 1 \\[1ex]
               \displaystyle
               - \sum_{(b_k,\ldots, b_1) \in \Xi_k \setminus \{{\small \Vec{e}}\}}
               \left(
                       f^{b_{k-1}}(B_{k-1}(N))  \cdots f^{b_1}(B_1(N))
               \right)
       \end{array}
       \right). \label{differenceCSP} 
\end{eqnarray}
%

It is easy to show that 
\[
	\lim_{N \rightarrow \infty}
	\left( 
		\Pwin_N (k,3) -  \Pwin_N (k,2) 
	\right)
	> 0 > 
	\lim_{N \rightarrow \infty}
	\left( 
		\Pwin_N (k,i_N^{(k-1)})- \Pwin_N (k,i_N^{(k-1)}-1) 
	\right),
\]
which implies that $\exists N', \forall N > N'$, 
	$3 \leq i^{(k)}_N \leq i_N^{(k-1)}-1$.
From the optimality of the threshold value $i^{(k)}_N$,  
	inequalities 
		$\Pwin_N (k,i^{(k)}_N-1) 
	\leq \Pwin_N (k,i^{(k)}_N)
	\geq \Pwin_N (k,i^{(k)}_N+1)  $
	hold.

\smallskip

\noindent
(i)	Since $0 \leq \Pwin_N (k,i^{(k)}_N) - \Pwin_N (k,i^{(k)}_N+1) $, 
	equality~(\ref{differenceCSP}) implies that 
\[
	\sum_{i'= i^{(k)}_N+1}^{i_N^{(k-1)}-1} r_{i'}  \leq   
		1 -   \sum_{(b_k,\ldots, b_1) \in \Xi_k \setminus \{{\small \Vec{e}}\}}
		\left(
			f^{b_{k-1}}(B_{k-1}(N))  \cdots f^{b_1}(B_1(N))
		\right).
\]
As a consequence of the above inequality and~(\ref{tightlambdaCSP}), 
	we have that 
\begin{eqnarray}
\label{upperBlock}
\lambda_k 
&= & \lim_{N \rightarrow \infty}
	\left(
		1- \hspace{-3ex} \sum_{(b_k,\ldots, b_1) \in \Xi_k \setminus \{{\small \Vec{e}}\}}
		\hspace{-3ex}
		\left(
			f^{b_{k-1}}(B_{k-1}(N))  \cdots f^{b_1}(B_1(N))
		\right)
	\right) 
\geq 	\lim_{N \rightarrow \infty}
		\left(
			\sum_{i'=i^{(k)}_N+1}^{i_N^{(k-1)}-1} r_{i'}
		\right)	\\   
\nonumber
 &=&		\lim_{N \rightarrow \infty}
 		\left(
 			\sum_{i'=i^{(k)}_N}^{i_N^{(k-1)}-2} \frac{1}{i'}
 		\right)  
 \geq  	\lim_{N \rightarrow \infty}
		\left(
			\ln \frac{i_N^{(k-1)}-1}{i^{(k)}_N}
		\right).
\end{eqnarray}
The above inequality and the assumption 
	that $\lim_{N \rightarrow \infty} i^{(k-1)}_N = +\infty$
	imply the property that
	$\lim_{N \rightarrow \infty} i^{(k)}_N = +\infty$. 
	
\smallskip
\noindent
(ii)
Form inequality~(\ref{upperBlock}) and 
	the property $\lim_{N \rightarrow \infty} i^{(k)}_N = +\infty$,
	we obtain that 
\begin{eqnarray*}
			\lim_{N \rightarrow \infty} \sum_{i' \in B_k(N)} r_{i'} 
	=		\lim_{N \rightarrow \infty} \sum_{i'=i_N^{(k)}}^{i_N^{(k-1)}-1} r_{i'} 
	= 		\lim_{N \rightarrow \infty}
				\left( 
					r_{i^{(k)}_N} + \sum_{i'=i_N^{(k)}+1}^{i_N^{(k-1)}-1} r_{i'} 
			\right)  
\leq 	\lim_{N \rightarrow +\infty} 
	\left( \frac{1}{i^{(k)}_N-1} \right) + \lambda_k  = \lambda_k.
\end{eqnarray*}

\noindent
The inequality 
	$0 \geq \Pwin_N (k,i^{(k)}_N-1)- \Pwin_N (k,i^{(k)}_N) $
	and equality~(\ref{differenceCSP}) directly imply that
\[
	\sum_{i'=i^{(k)}_N}^{i_N^{(k-1)}-1} r_{i'}
  \geq  
		1 -   \sum_{(b_k,\ldots, b_1) \in \Xi_k \setminus \{{\small \Vec{e}}\}}
		\left(
			f^{b_{k-1}}(B_{k-1}(N))  \cdots f^{b_1}(B_1(N))
		\right).
\]
Combining the above inequality and~(\ref{tightlambdaCSP}), 
	we obtain that
\begin{eqnarray*}
	\lim_{N \rightarrow \infty} \sum_{i' \in B_k(N)} r_{i'}
=	\lim_{N \rightarrow \infty}
			\sum_{i'=i^{(k)}_N}^{i_N^{(k-1)}-1} r_{i'}
 \geq  
	\lim_{N \rightarrow \infty}
	\left(
		1- \hspace{-3ex} \sum_{(b_k,\ldots, b_1) \in \Xi_k \setminus \{{\small \Vec{e}}\}}
		\left(
			f^{b_{k-1}}(B_{k-1}(N))  \cdots f^{b_1}(B_1(N))
		\right)
	\right) 
= \lambda_k.
\end{eqnarray*}

As a result, we have shown that
	$
		\lim_{N \rightarrow \infty} \sum_{i \in B_k(N)} r_{i}
		=\lambda_k.
	$
	
\smallskip
\noindent 
(iii) 
From the definition of $q_i$, it is clear that
\begin{eqnarray}
\nonumber
	  \ln \prod_{i \in B_k(N)} q_i 
	&=& \sum_{i \in B_k(N)} \ln (1-1/i)
	= \sum_{i \in B_k(N)} \left( \ln (i-1) - \ln i \right) \\
&&	= \ln (i_N^{(k)}-1) - \ln (i_N^{(k-1)}-1) 
	= \ln \frac{i_N^{(k)}-1}{i_N^{(k-1)}-1} 
\label{ratio-threshold} 
\end{eqnarray}
and
\begin{eqnarray*}
	\sum_{i=i_N^{(k)}-1}^{i_N^{(k-1)}-2} \frac{1}{i} \geq 
& \displaystyle
	 \ln (i_N^{(k-1)}-1) - \ln (i_N^{(k)}-1)  
&\geq 	\sum_{i=i_N^{(k)}}^{i_N^{(k-1)}-2} \frac{1}{i},
\end{eqnarray*}
where $i_N^{(0)}$ denotes $N+1$.
From the above, we obtain that
\begin{eqnarray*}
- \lim_{N \rightarrow \infty} \ln \prod_{i \in B_k(N)} q_i 
&\leq & 	\lim_{N \rightarrow \infty} \sum_{i=i_N^{(k)}-1}^{i_N^{(k-1)}-2} \frac{1}{i}
	=	\lim_{N \rightarrow \infty} \sum_{i=i_N^{(k)}}^{i_N^{(k-1)}-1} r_i 
	=	\lim_{N \rightarrow \infty} \sum_{i=B_k(N)} r_i
	=	\lambda_k,	 \;\;\; \mbox{and} \\
- \lim_{N \rightarrow \infty} \ln \prod_{i \in B_k(N)} q_i
&\geq &	\lim_{N \rightarrow \infty} \sum_{i=i_N^{(k)}}^{i_N^{(k-1)}-2} \frac{1}{i}
	=	\lim_{N \rightarrow \infty}
	\left(
		\left( \frac{-1}{i^{(k)}_N-1} \right)
		+ \sum_{i=i_N^{(k)}}^{i_N^{(k-1)}-1} r_i
	\right)
	=	\lim_{N \rightarrow \infty} \sum_{i=B_k(N)} r_i
	=	\lambda_k.
\end{eqnarray*}
Accordingly, we have that 
	$ \lim_{N \rightarrow \infty} \prod_{i \in B_k(N)} q_i =e^{-\lambda_k}$.

\smallskip
\noindent 
(iv) 
We omit the case of $f^0(B_k(N))$ in the following,
	since the equality $f^0(B_k(N))=1=\frac{\lambda_k^0}{0!}$ holds permanently.
We discuss cases $b \in  \{1,2,\ldots, m\}$.
The size of block $B_k(N)$ satisfies the following; 
\[
	\lim_{N \rightarrow \infty} |B_k (N)|r_{i_N^{(k)}} 
	\geq \lim_{N \rightarrow \infty} \sum_{i=i_N^{(k)}}^{i_N^{(k-1)}-1} r_{i}
	= \lim_{N \rightarrow \infty} \sum_{i \in B_k(N)} r_{i}
	=  \lambda_k >0.
\]
The positivity of $\lambda_k$ 
	and the equality 
	$\lim_{N \rightarrow \infty}r_{i_N^{(k)}}=\lim_{N \rightarrow \infty}\frac{1}{i_N^{(k)}-1}=0$
	imply that 
	$ \lim_{N \rightarrow \infty} |B_k(N)| = +\infty$.
Thus, we have that $\exists N'$, $\forall N > N'$, 
	the size of $B_k(N)$ exceeds $m$  and thus
\[
	\forall b \in \{1,2,\ldots,m\}, \;\;\; 
	f^b(B_{k}(N) )= 
		\sum_{B' \subseteq B_k(N), \; |B'|=b} 
		\left( \prod_{i \in B'} r_i \right).
\]
We show that 
	$\lim_{N \rightarrow \infty} f^b(B_{k}(N) )= \frac{\lambda_{k}^b}{b!}$
	by induction on $b$.
When  $b=1$, property~(ii) implies that
\[
	\lim_{N \rightarrow \infty} f^1 (B_{k}(N) )
		= \lim_{N \rightarrow \infty} \sum_{i \in B_k(N)} r_i
		= \frac{\lambda_{k}^{1}}{1!}. 
\]
For any positive vector $\Vec{r}' \in \Real^{n'}$, 
	the inequality $f^b(\Vec{r}') \leq (\sum_{i=1}^{n'} r'_i)/(b!)$ holds, 
	and thus we have 
\begin{eqnarray} \label{ToReferee-1}
	\lim_{N \rightarrow \infty} f^b(B_k(N))
	\leq \lim_{N \rightarrow \infty} 
		\frac{\left( \sum_{i \in B_k(N)} r_i \right)^b}{b!}
	= \frac{\lambda_{k}^b}{b!}. 
\end{eqnarray}
The induction hypothesis  
	$\lim_{N \rightarrow \infty} f^{b-1}(B_{k}(N) )
	= \frac{\lambda_{k}^{b-1}}{(b-1)!}$
	implies that
\begin{eqnarray}
\lefteqn{
	  \lim_{N \rightarrow \infty} f^b(B_k(N))
	=\lim_{N \rightarrow \infty} 
		\sum_{B' \subseteq B_k(N), \; |B'|=b} 
		\left( \prod_{i \in B'} r_i \right)  
}\nonumber \\
	&=& \lim_{N \rightarrow \infty} 
		\left( \frac{1}{b} \right) 
		\sum_{B'' \subseteq B_k(N), \; |B''|=b-1} 
		\left( 
			\left( \prod_{i \in B''} r_i \right)
			\left( \sum_{j \in B_k(B) \setminus B''} r_j \right)
		\right) \nonumber \\
	&\geq & \lim_{N \rightarrow \infty} 
		\left( \frac{1}{b} \right) 
		\left( \sum_{i \in B_k(N)} r_i - (b-1) r_{i_N^{(k)}}\right)
		\sum_{B'' \subseteq B_k(N), \; |B''|=b-1} 
		\left( \prod_{i \in B''} r_i \right) \nonumber \\
	&= & \lim_{N \rightarrow \infty} 
		\frac{
		\left( \sum_{i \in B_k(N)} r_i - (b-1) r_{i_N^{(k)}}\right)}{b} 
		f^{b-1}(B_k (N)) 
	= \frac{\lambda_k}{b} \frac{\lambda_k^{b-1}}{(b-1)!} 
		= \frac{\lambda_k^b}{b!}.
	\label{ToReferee-2}
\end{eqnarray}
Thus, we have shown 
$
	 \lim_{N \rightarrow \infty } f^b(B_{k}(N) )
	= \frac{\lambda_{k}^b}{b!}.
$
\hfill \qed
\smallskip

The following theorem gives the win probability of the secretary problem.

\begin{theorem} \label{CSP1}
Let $(\lambda_1,\lambda_2,\ldots,\lambda_m)$ 
	be a unique solution of~\mbox{\rm (\ref{EQSystem})}.
Given a sequence 
	of $0/1$ random variables  $X_2, X_3, \ldots , X_N$  
	satisfying $\Prob [X_i=1]=1/i$ 
	$(\forall i \in \{2,3,\ldots,N\})$,
	the probability of win $\Pwin_N (m)$ of an optimal strategy 
	for the $m$-stopping secretary problem
	defined on $X_2, X_3, \ldots , X_N$  
	satisfies 
\[
	\lim_{N \rightarrow \infty} \Pwin_N (m) =
	\sum_{k=1}^m e^{-\sum_{k'=1}^k \lambda_{k'}}.
\]
\end{theorem}

 \mbox{Proof. }
In the previous lemma, 
	we have shown properties~(iii) and~(iv) 
	for each $k \in \{1,2,\ldots, m\}$.
Thus, the probability of win $\Pwin_N (m)$ satisfies 
\begin{eqnarray*}
	\lim_{N \rightarrow \infty} \Pwin_N (m)
&= & 
	\lim_{N \rightarrow \infty} 
	\sum_{k=1}^m 
	\left( \left( \prod_{i \in B_k(N) \cup \cdots \cup B_1(N)} \!\!\! q_i \right)
		\left( \sum_{(b_k,\ldots, b_1) \in \Xi_k } \!\!\!\!\!\!
			\left(
				f^{b_{k}}(B_{k}(N))  \cdots f^{b_1}(B_1(N))
			\right)
		\right) 
	\right) \\
&= &
	\sum_{k=1}^m 
	\left( \left( \prod_{k'=1}^k e^{-\lambda_{k'}} \right)
		\left( \sum_{(b_k,\ldots, b_1) \in \Xi_k }
			\frac{\lambda_{k  }^{b_{k  }}}{b_{k  }!} 
			\frac{\lambda_{k-1}^{b_{k-1}}}{b_{k-1}!} 
			\cdots 
			\frac{\lambda_{1  }^{b_1    }}{b_1!} 
		\right)
	\right) 
= \sum_{k=1}^m e^{-\sum_{k'=1}^k \lambda_{k'}}, 
\end{eqnarray*} 
where the last equality is obtained from~(\ref{EQSystem}).
\hfill \qed
\smallskip


In the rest of this section, 
	we prove a conjecture on a relation between
	threshold values and  win probability 
	of the secretary problem
	indicated by Gilbert and Mosteller~\cite{GM1966}.

\begin{theorem}
Given a sequence 
	of $0/1$ random variables  $X_2, X_3, \ldots , X_N$  
	satisfying $\Prob [X_i=1]=1/i$ 
	$(\forall i \in \{2,3,\ldots,N\})$,
	an optimal (threshold) strategy 
	{\tt Threshold}$(i_N^{(m)},i_N^{(m-1)},\ldots,i_N^{(1)})$
	for the $m$-stopping secretary problem satisfies
\[
	\lim_{N \rightarrow \infty}
	\left( 
		\frac{i_N^{(m)}}{N}+\frac{i_N^{(m-1)}}{N}+\cdots + \frac{i_N^{(1)}}{N} 
	\right)
	= \lim_{N \rightarrow \infty} \Pwin_N (m)
\]
	where $\Pwin_N (m)$ denotes a corresponding probability of win. 
\end{theorem}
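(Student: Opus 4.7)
The plan is to exploit the telescoping structure of $q_i=(i-1)/i$ in the secretary setting, which converts block products into simple ratios of threshold values. The key inputs are already established in the proof of Theorem~\ref{CSP}: namely, that $\lim_{N\to\infty}\prod_{i\in B_k(N)}q_i=e^{-\lambda_k}$ for every $k\in\{1,2,\ldots,m\}$, and that $\lim_{N\to\infty}i_N^{(k)}=+\infty$ for every $k$.

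First I would compute the block products explicitly. For $k=1$, since $B_1(N)=\{i_N^{(1)},i_N^{(1)}+1,\ldots,N\}$, the product telescopes as
\[
\prod_{i\in B_1(N)}q_i\;=\;\prod_{i=i_N^{(1)}}^{N}\frac{i-1}{i}\;=\;\frac{i_N^{(1)}-1}{N}.
\]
Analogously, for $2\le k\le m$, since $B_k(N)=\{i_N^{(k)},\ldots,i_N^{(k-1)}-1\}$,
\[
\prod_{i\in B_k(N)}q_i\;=\;\prod_{i=i_N^{(k)}}^{i_N^{(k-1)}-1}\frac{i-1}{i}\;=\;\frac{i_N^{(k)}-1}{i_N^{(k-1)}-1}.
\]
Combining these with the limit values established in the proof of Theorem~\ref{CSP} gives $\lim_{N\to\infty}(i_N^{(1)}-1)/N=e^{-\lambda_1}$ and, for $k\geq 2$, $\lim_{N\to\infty}(i_N^{(k)}-1)/(i_N^{(k-1)}-1)=e^{-\lambda_k}$. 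Because $i_N^{(k)}\to\infty$, the $-1$ terms are negligible, so $\lim_{N\to\infty}i_N^{(k)}/i_N^{(k-1)}=e^{-\lambda_k}$ (with the convention $i_N^{(0)}=N$).

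Next I would multiply these ratios telescopically: for each $k\in\{1,2,\ldots,m\}$,
\[
\frac{i_N^{(k)}}{N}\;=\;\frac{i_N^{(1)}}{N}\cdot\frac{i_N^{(2)}}{i_N^{(1)}}\cdots\frac{i_N^{(k)}}{i_N^{(k-1)}},
\]
so taking $N\to\infty$ yields $\lim_{N\to\infty}i_N^{(k)}/N=\prod_{k'=1}^{k}e^{-\lambda_{k'}}=e^{-\sum_{k'=1}^k\lambda_{k'}}$. Summing over $k$ and invoking Theorem~\ref{CSP},
\[
\lim_{N\to\infty}\sum_{k=1}^{m}\frac{i_N^{(k)}}{N}\;=\;\sum_{k=1}^{m}e^{-\sum_{k'=1}^k\lambda_{k'}}\;=\;\lim_{N\to\infty}\Pwin_N(m).
\]

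There is no serious obstacle here since all the asymptotic information needed about the block products has already been extracted in Theorem~\ref{CSP}; the only thing to verify is that the telescoping identity for $q_i=(i-1)/i$ combines cleanly with the $-1$ offsets, which is handled by the fact that the threshold values diverge to infinity.
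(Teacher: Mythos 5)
Your proposal is correct and follows essentially the same route as the paper: the telescoping identity $\prod_{i\in B_k(N)}q_i=(i_N^{(k)}-1)/(i_N^{(k-1)}-1)$ is exactly the paper's equality~(\ref{ratio-threshold}), and the paper likewise combines it (in logarithmic form, with $i_N^{(0)}=N+1$) with the limits $\prod_{i\in B_k(N)}q_i\to e^{-\lambda_k}$ and $i_N^{(k)}\to\infty$ from the proof of Theorem~\ref{CSP}, telescopes to get $i_N^{(k)}/N\to e^{-\sum_{k'\le k}\lambda_{k'}}$, and sums. Your handling of the $-1$ offsets via divergence of the thresholds matches the paper's correction factors $(1-1/i_N^{(k-1)})$ and $(1+1/(i_N^{(k)}-1))$.
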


 \mbox{Proof. }
In the following, 
	we put $i_N^{(0)}=N+1$ and
	we denotes a unique solution of~\mbox{\rm (\ref{EQSystem})}
	by $(\lambda_1,\lambda_2,\ldots,\lambda_m)$.
Equality~(\ref{ratio-threshold}) 
	imply that for any $k \in \{1,2,\ldots, m\}$, 
\begin{eqnarray*}
\lefteqn{
		\lim_{N \rightarrow \infty} \ln \frac{i_N^{(k)}}{i_N^{(k-1)}}
	= 	\lim_{N \rightarrow \infty} 
		\ln \left(
			\frac{i_N^{(k)  }-1}{i_N^{(k-1)}-1} \;
			\frac{i_N^{(k-1)}-1}{i_N^{(k-1)}  } \;
			\frac{i_N^{(k)  }  }{i_N^{(k)  }-1}
		\right) 
}\\
	&=& \lim_{N \rightarrow \infty} 
			\left( 
				  \ln \left( \prod_{i \in B_k(N)} q_i \right)
				+ \ln \left( 1- \frac{1}{i_N^{(k-1)}} \right)
				+ \ln \left( 1+ \frac{1}{i_N^{(k)}-1} \right) 
			\right)
	= -\lambda_k.
\end{eqnarray*}
Thus, we have 
\begin{eqnarray*}
	\lim_{N \rightarrow \infty} \ln \frac{i_N^{(k)}}{N}
	&=&
	\lim_{N \rightarrow \infty} \ln
		\left(
			\left( \frac{i_N^{(k)}}{i_N^{(k-1)}} \right)
			\left( \frac{i_N^{(k-1)}}{i_N^{(k-2)}} \right)
			\cdots
			\left( \frac{i_N^{(1)}}{i_N^{(0)}} \right) 
			\left( \frac{N+1      }{N} \right) 
		\right)	\\	
	&=& 
	\lim_{N \rightarrow \infty} 
		\left(
			\ln \left( \frac{i_N^{(k)}}{i_N^{(k-1)}} \right)
		+	\ln \left( \frac{i_N^{(k-1)}}{i_N^{(k-2)}} \right)
		+	\cdots
		+	\ln \left( \frac{i_N^{(1)}}{i_N^{(0)}} \right) 
		+	\ln \left( \frac{N+1      }{N} \right) 
		\right)	\\
	&=& -(\lambda_{k}+\lambda_{k-1}+\cdots + \lambda_1)
	= - \sum_{k'=1}^k \lambda_{k'}. 
\end{eqnarray*}
The above equality and Theorem~\ref{CSP1} imply that
\[
	\lim_{N \rightarrow \infty}
	\left( 
		\frac{i_N^{(m)}}{N}+\frac{i_N^{(m-1)}}{N}+\cdots + \frac{i_N^{(1)}}{N} 
	\right)
	= \lim_{N \rightarrow \infty} \sum_{k=1}^m \frac{i_N^{(k)}}{N}
	= \sum_{k=1}^m e^{- \sum_{k'=1}^k \lambda_{k'}}
	= \lim_{N \rightarrow \infty} \Pwin_N (m).
\]
\hfill \qed
\smallskip

\section{Discussion.}

We dealt with the odds problem and the secretary problem
	with multiple stopping chances.
We derived a tight lower bound of the probability of win 
	for odds problem and showed that 
	the lower bound is attained by the secretary problem.
We also proved a conjecture on the secretary problem
	which connects the optimal threshold strategy and the probability of win.

\section*{APPENDIX}{Proof of Theorem~\ref{PositiveSol}}

Before showing Theorem~\ref{PositiveSol}, 
	we need some definitions and a lemma.
For any pair of positive integers $(k,k')$ 
	satisfying $m\geq k > k' \geq 1$,
	we introduce
\[
	\Xi^0_k (k')  =
	\left\{ 
		(b_k,b_{k-1},\ldots ,b_1) \in \Xi_k   
		\left| \;   
			0=b_{k}=b_{k-1}=\cdots =b_{k'+1} 
		\right. 
	\right\},
\]
where we define $\Xi^0_k(k)=\Xi_k$.
We also denote 
\[
	\alpha (k,k')
	=	\sum_{(b_k,\ldots ,b_1) \in \Xi^0_k(k')}
		\left( 
			\frac{\lambda_{k  }^{b_k}    }{b_k!    } 
			\frac{\lambda_{k-1}^{b_{k-1}}}{b_{k-1}!} 
			\cdots 
			\frac{\lambda_{1  }^{b_1    }}{b_1!} 
		\right)
	=	\sum_{(0,\ldots, 0, b_{k'},\ldots ,b_1) \in \Xi^0_k(k')}
		\left( 
			\frac{\lambda_{k'  }^{b_{k'}}  }{b_{k'  }!} 
			\frac{\lambda_{k'-1}^{b_{k'-1}}}{b_{k'-1}!} 
			\cdots 
			\frac{\lambda_{1  }^{b_1    }}{b_1!} 
		\right),
\]
	where $(\lambda_m,\lambda_{m-1},\ldots, \lambda_1)$ is a solution
	of equality system~(\ref{EQSystem}).

\begin{lemma}\label{alpha}
For any integer $k' \in \{1,2,\ldots , m\}$, 
	$\alpha (\cdot, \cdot)$ satisfies
	the following inequalities
\[
  1=\alpha (k',k')>\alpha (k'+1,k') > \cdots > \alpha (m,k') >0.
\]
\end{lemma}

 \mbox{Proof. }
When we consider $\alpha (k',k')$, 
	equality $\Xi^0_{k'}(k')=\Xi_{k'}$ holds and thus
	(\ref{EQSystem}) directly implies that
\[
	\alpha (k',k')
=	\sum_{(b_{k'},\ldots ,b_1) \in \Xi_{k'}}
		\left( 
			\frac{\lambda_{k'  }^{b_{k'}}  }{b_{k'  }!} 
			\frac{\lambda_{k'-1}^{b_{k'-1}}}{b_{k'-1}!} 
			\cdots 
			\frac{\lambda_{1  }^{b_1    }}{b_1!} 
		\right)=1.
\]
Next, we show the above inequalities by induction on $k'$.

When $k'=1$, it is clear that $\Xi^0_k(1)=\{(0,0, \cdots, 0, k)\}$
	and thus $\alpha (k,1)=\frac{\lambda_1^k}{k!}=\frac{1}{k!} $ 
	for each $k \in \{1,2,\ldots ,m\}$, 
	since $\lambda_1=1$.
Thus, we have the inequality
\[
	1=\alpha (1,1) > \alpha(2,1) > \cdots > \alpha (m,1) >0.
\] 

Next, we consider a general case.
Every vector $(0,\ldots ,0,b_{k'},\ldots ,b_1) \in \Xi^0_k(k')$
	satisfies that $b_{k'}\in \{0,1,\ldots ,1+k-k'\}$. 
Especially in the case that $b_{k'}=1+k-k'$, 
	the definition of $\Xi_k$ implies that 
	$(0,\ldots ,0, b_{k'},\ldots ,b_1)
	=(0,\ldots ,0, 1+k-k', 0, \ldots ,0)$.
We consider the remaining case that 
	$b_{k'} \in \{0,1,\ldots ,k-k'\}$.
Then, there exists an index $k^* \in \{k'-1,k'-2 \ldots , 2,1\}$
	satisfying 
\[
\begin{array}{lcl}
	1+k-k'  	&>&b_{k'}, \\
	1+k-(k'-1)  &>&b_{k'}+ b_{k'-1}, \\
			& \vdots & \\
	1+k-(k^*+1) &>&b_{k'}+ b_{k'-1}+\cdots + b_{k^*+1},\\			
	1+k-k^* 	&=&b_{k'}+ b_{k'-1}+\cdots + b_{k^*+1}+ b_{k^*}, \\
			0	&=& b_{k^*-1}=\cdots =b_1.
\end{array}
\]
The above inequalities and equalities imply that
\[
\begin{array}{lcl}
	1+(k-b_{k'})- k'  	&>&0, \\
	1+(k-b_{k'})-(k'-1)  &>&b_{k'-1}, \\
			& \vdots & \\
	1+(k-b_{k'})-(k^*+1) &>&b_{k'-1}+\cdots + b_{k^*+1},\\			
	1+(k-b_{k'})-k^* 	&=&b_{k'-1}+\cdots + b_{k^*+1}+ b_{k^*}, \\
			0	&=& b_{k^*-1}=\cdots =b_1,
\end{array}
\]
	and thus we have 
	$(0,\ldots ,0,0, b_{k'-1},\ldots ,b_1) 
		\in \Xi^0_{k-b_{k'}}(k'-1)$.
The inverse implication is clear, i.e.,
	$0 \leq \forall b_{k'} \leq k-k'$, 
	if
	$(0,\ldots ,0,0, b_{k'-1},\ldots ,b_1) 
		\in \Xi^0_{k-b_{k'}}(k'-1)$,
	then 
	$(0,\ldots ,0,b_{k'},\ldots ,b_1) \in \Xi^0_k(k')$.
	
From the above,  
	the definition of $\alpha (k, k')$ implies that
\begin{eqnarray}
 \alpha (k, k') &=&
  	\sum_{(0,\ldots, 0, b_{k'},\ldots ,b_1) \in \Xi^0_k(k')}
	\left( 
		\frac{\lambda_{k'  }^{b_{k'}}  }{b_{k'  }!} 
		\frac{\lambda_{k'-1}^{b_{k'-1}}}{b_{k'-1}!} 
		\cdots 
		\frac{\lambda_{1  }^{b_1    }}{b_1!} 
	\right)  \nonumber \\
&=&		
	\sum_{b=0}^{1+k-k'}
 	\sum_{(0,\ldots, 0, b, b_{k'-1},\ldots ,b_1) \in \Xi^0_k(k')}
	\left( 
		\frac{\lambda_{k'  }^{b       }}{b!       } 
		\frac{\lambda_{k'-1}^{b_{k'-1}}}{b_{k'-1}!} 
		\cdots 
		\frac{\lambda_{1  }^{b_1    }}{b_1!} 
	\right)  \nonumber \\
&=& \frac{\lambda_{k'  }^{1+k-k'}}{(1+k-k')!}
		\left( 
			\frac{\lambda_{k'-1}^{0}}{0!} 
			\cdots 
			\frac{\lambda_{1   }^{0}}{0!} 
		\right) 
  + \sum_{b=0}^{k-k'}
	\left(
		\frac{\lambda_{k'  }^{b       }}{b!       }
		\sum_{(0,\ldots, 0, 0, b_{k'-1},\ldots ,b_1)
				\in \Xi^0_{k-b}(k'-1)}
		\left( 
			\frac{\lambda_{k'-1}^{b_{k'-1}}}{b_{k'-1}!} 
			\cdots 
			\frac{\lambda_{1  }^{b_1    }}{b_1!} 
		\right) 
	\right)
 \nonumber  \\
 &=& \frac{\lambda_{k'  }^{1+k-k'}}{(1+k-k')!}
  +  \sum_{b=0}^{k-k'}
	\left(
		\frac{\lambda_{k'}^{b}}{b!} \; 
		\alpha (k-b,k'-1) 
	\right).
 \label{alpha_K-induction} 
\end{eqnarray}
When $k=k'$, the above equality implies that
\begin{equation}\label{lambda_K-induction}
1= \alpha (k',k')
	=\frac{\lambda_{k'}}{1!}
	+\frac{\lambda_{k'}^0}{0!}\alpha (k',k'-1)
	=\lambda_{k'} + \alpha (k',k'-1)
\end{equation}

We assume the following induction hypothesis
\begin{equation} \label{Induction-Hypo}
	1=\alpha (k'-1,k'-1) > \alpha(k',k'-1) > \cdots > \alpha (m,k'-1) >0.
\end{equation}

Then, for any integer  $k \in \{k',k'+1,\ldots ,m\}$, 
	we can show that
\begin{eqnarray*}
\lefteqn{
 \alpha (k, k')- \alpha (k+1,k') 
}\\
&=&
 \frac{\lambda_{k'  }^{1+k-k'}}{(1+k-k')!}
  +  \sum_{b=0}^{k-k'}
	\left(
		\frac{\lambda_{k'}^{b}}{b!} \; 
		\alpha (k-b,k'-1) 
	\right)
- \frac{\lambda_{k'  }^{2+k-k'}}{(2+k-k')!}
  -  \sum_{b=0}^{1+k-k'}
	\left(
		\frac{\lambda_{k'}^{b}}{b!} \; 
		\alpha (k+1-b,k'-1) 
	\right) \\
&=&
 \frac{\lambda_{k'  }^{1+k-k'}}{(1+k-k')!}
-\frac{\lambda_{k'  }^{2+k-k'}}{(2+k-k')!}
-\frac{\lambda_{k'  }^{1+k-k'}}{(1+k-k')!}\alpha (k',k'-1)\\
&&
+\sum_{b=0}^{k-k'}
	\left(
		\frac{\lambda_{k'}^{b}}{b!} \; 
		\Bigl(
			\alpha (k-b,k'-1) - \alpha (k+1-b,k'-1)
		\Bigr)
	\right) \\
&\geq &
 \frac{\lambda_{k'  }^{1+k-k'}}{(1+k-k')!} 
	\Bigl( 1- \alpha (k',k'-1) \Bigr)
-\frac{\lambda_{k'  }^{2+k-k'}}{(2+k-k')!} 
\mbox{\hspace{10ex}(obtained from~(\ref{Induction-Hypo}))}   \\
&=&
 \frac{\lambda_{k'  }^{1+k-k'}}{(1+k-k')!} \lambda_{k'  }
-\frac{\lambda_{k'  }^{2+k-k'}}{(2+k-k')!} 
 \mbox{\hspace{26ex}(obtained from~(\ref{lambda_K-induction}))} \\
&=&  \frac{\lambda_{k'  }^{2+k-k'}}{(1+k-k')!}
	\left(
		1- \frac{1}{2+k-k'}
	\right)
	> 0.  \\[-5ex]
\end{eqnarray*}
\hfill \qed

\bigskip
Now we describe a proof of Theorem~\ref{PositiveSol}.

\underline{\mbox{Proof of Theorem~\ref{PositiveSol}.}}
Obviously, $\lambda_1 =1 >0$.
Equalities~(\ref{lambda_K-induction}) and
	Lemma~\ref{alpha} imply that
\begin{eqnarray*}
	\forall k' \in \{2,3, \ldots , m\},&& \;\;\; 
	\lambda_{k'}=1- \alpha(k',k'-1)
	=\alpha (k'-1,k'-1)-  \alpha(k',k'-1)
	>0.  \\[-5ex]
\end{eqnarray*}
 \hfill \qed

Recurrence relations (\ref{alpha_K-induction}) and (\ref{lambda_K-induction})
	give the following efficient algorithm for calculating 
	$(\lambda_1,\lambda_2, \ldots, \lambda_m)$ without enumerating
	vectors in $\Xi_k$.

\begin{description}
\setlength{\leftskip}{1cm}
\setlength{\itemindent}{-0.5cm}
\item[\underline{\rm Algorithm A}]
\item[\rm Step~0:]
	Set $k':=1$; $\lambda_1:=1$; 
	$\alpha (k,1):=1/k! \,$ for all $k \in \{1,2,\ldots,m\}$.
\item[\rm Step~1:]
	Set $k':=k'+1$; $\alpha (k',k'):=1$;  $\lambda_{k'}:=1-\alpha (k',k'-1)$. \\
	For each $k \in \{k'+1,\ldots,m\}$, 
	calculate $\alpha (k,k')$ by (\ref{alpha_K-induction}).
\item[\rm Step~2:]
	If $k'=m$, then stop and output $(\lambda_1,\lambda_2,\ldots, \lambda_m)$.
	Else, goto Step~1.
\end{description}

\medskip
\noindent
The total number of basic arithmetic operations 
	required in Algorithm~A
	is bounded by $\Order (m^3)$.


\section*{Acknowledgments.}

 The authors thank Professor A\@.~V\@.~Gnedin 
 for his comment on using a Poisson approximation 
 to find the asymptotic probability of win 
 at the 34th Conference on Stochastic Processes and their Applications, Osaka.

This work was supported by JSPS KAKENHI Grant Numbers 26285045, 26242027.
 

\nocite{CRS}
\nocite{ASSAF2000}
\nocite{BRUSS2009}
\nocite{FERGUSON2006}
\nocite{FERGUSON2008}
\nocite{GNEDIN2010}
\nocite{SHIRYAEV1978}



\end{document}